\definecolor{titlecolor}{RGB}{245, 242, 238} 
\definecolor{drawgray}{HTML}{666666}
\definecolor{drawgray_l}{HTML}{F5F5F5}
\definecolor{drawblue}{HTML}{6C8EBF}
\definecolor{drawblue_l}{HTML}{DAE8FC}
\definecolor{drawgreen}{HTML}{82B366}
\definecolor{drawgreen_l}{HTML}{D5E8D4}
\definecolor{draworange}{HTML}{D79B00}
\definecolor{draworange_l}{HTML}{FFE6CC}
\definecolor{drawyellow}{HTML}{D6B656}
\definecolor{drawyellow_l}{HTML}{FFF2CC}
\definecolor{drawred}{HTML}{B85450}
\definecolor{drawred_l}{HTML}{EA6B66}
\definecolor{drawviolet}{HTML}{9673A6}
\definecolor{drawviolet_l}{HTML}{E1D5E7}
\tikzstyle{block} = [rectangle, draw, fill=blue!20, text width=4em, text centered, rounded corners, minimum height=4em]
\tikzstyle{decision} = [diamond, draw, fill=blue!20, text width=4.5em, text badly centered, node distance=3cm, inner sep=0pt]
\tikzstyle{line} = [draw, -latex']
\renewcommand{\epsilon}{\varepsilon}
\newcommand{\GAP}{\text{GAP}}
\newcommand{\abs}[1]{\left\lvert #1 \right\rvert}
\newcommand{\size}{k}
\newcommand{\maxiter}{n_\textup{max}}
\newcommand{\tr}{$\omega_\textsc{tr}$\xspace}
\newcommand{\eu}{$\omega_\textsc{eu}$\xspace}
\newcommand{\ew}{$\omega_\textsc{ew}$\xspace}
\newcommand{\rbp}{$r_\textsc{bp}$\xspace}
\newcommand{\rcg}{$r_\textsc{cg}$\xspace}
\newcommand{\scalingFactor}{0.95}
\newcommand{\param}{\mathit}
\newcommand{\argmin}{\text{argmin}}
\newcommand{\bpplain}{\textsc{bp}\xspace}
\newcommand{\bpmulti}{\textsc{bp\_m}\xspace}
\newcommand{\bpbg}{\textsc{bp+b}\xspace}
\newcommand{\lns}{\textsc{lns}\xspace}
\newcommand{\lnsr}{\textsc{lns+r(b)}\xspace}
\newcommand{\lnsrb}{\textsc{lns+rb(b)}\xspace}
\newcommand{\lnsrf}{\textsc{lns+r(f)}\xspace}
\newcommand{\lnsrbf}{\textsc{lns+rb(f)}\xspace}
\newcommand{\lnsbf}{\textsc{lns+b(f)}\xspace}
\theoremstyle{plain}
\newtheorem{lemma}{Lemma}
\newacronym{bdsp}{BDSP}{Bus Driver Scheduling Problem}
\newacronym{bp}{B\&P}{Branch and Price}
\newacronym{cg}{CG}{Column Generation}
\newacronym{lns}{LNS}{Large Neighborhood Search}
\newacronym{rcspp}{RCSPP}{Resource Constrained Shortest Path Problem}
\begin{document}

\title[Integrating CG and LNS for the BDSP]{Integrating Column Generation and Large Neighborhood Search for Bus Driver Scheduling with Complex Break Constraints}

\author[Kletzander]{Lucas Kletzander}
\email{lucas.kletzander@tuwien.ac.at}
\author[Mannelli Mazzoli]{Tommaso Mannelli Mazzoli}
\email{tommaso.mazzoli@tuwien.ac.at}
\author[Musliu]{Nysret Musliu}
\email{nysret.musliu@tuwien.ac.at}
\author[Van Hentenryck]{Pascal Van Hentenryck}
\email{pascal.vanhentenryck@isye.gatech.edu}

\begin{abstract}
    The \gls{bdsp} is a combinatorial optimization problem with the goal to design shifts to cover prearranged bus tours. The objective takes into account the operational cost as well as the satisfaction of drivers. This problem is heavily constrained due to strict legal rules and collective agreements.
    The objective of this article is to provide state-of-the-art exact and hybrid solution methods that can provide high-quality solutions for instances of different sizes.
    This work presents a comprehensive study of both an exact method, \gls{bp}, as well as a \gls{lns} framework which uses \gls{bp} or \gls{cg} for the repair phase to solve the \gls{bdsp}. It further proposes and evaluates a novel deeper integration of \gls{bp} and \gls{lns}, storing the generated columns from the \gls{lns} subproblems and reusing them for other subproblems, or to find better global solutions. 
    The article presents a detailed analysis of several components of the solution methods and their impact, including general improvements for the \gls{bp} subproblem, which is a high-dimensional \gls{rcspp}, and the components of the \gls{lns}. The evaluation shows that our approach provides new state-of-the-art results for instances of all sizes, including exact solutions for small instances, and low gaps to a known lower bound for mid-sized instances.
    We observe that \gls{bp} provides the best results for small instances, while the tight integration of \gls{lns} and \gls{cg} can provide high-quality solutions for larger instances, further improving over \gls{lns} which just uses \gls{cg} as a black box. The proposed methods are general and can also be applied to other rule sets and related optimization problems.
\end{abstract}

\maketitle

\section{Introduction}
\label{sec:Introduction}

Scheduling employees for public transport is an area that needs to respect a variety of complex constraints, time schedules, spatial requirements, and conflicting objectives, giving rise to difficult optimization problems. Drivers bear great responsibility for their passengers, leading to a range of legal requirements, collective agreements and company policies that demand particular break assignments and shift characteristics. Companies require cost-efficient schedules, while drivers and labor unions request employee-friendly schedules to reduce stress and increase compatibility of the required shift work with the private life of the drivers.

This paper is a significantly extended version of our previous conference papers presented at AAAI \cite{kletzander2021branch} and ICAPS \cite{mazzoli2024investigating}. It presents exact and hybrid methods for a complex \acrfull{bdsp}. The constraints of the \gls{bdsp} depend on the country's legal regulation, and in our case, they follow the Austrian \textit{collective agreement for employees in private omnibus providers}~\cite{WKO} using the rules for regional lines. In particular, the collective agreement has stringent rules requiring drivers to take frequent breaks, with the eventual option of splitting them into multiple parts, and the objectives in this article combine cost optimization with several employee satisfaction criteria, such as reducing long unpaid breaks or frequent vehicle changes.

As an exact approach, we propose \acrfull{bp}. This method works on a branching tree, and performs \acrfull{cg} at each node, where the problem is split into Set Partitioning as the master problem and the \acrfull{rcspp} as the subproblem. However, due to the complex set of constraints, the labels used in solving the subproblem have a high number of dimensions, leading to a slow determination of non-dominated labels when solving the subproblem. Therefore, several options to improve the generation of new columns in the presence of a high number of dimensions are investigated, ultimately leading to the use of k-d trees and bounding boxes in a two-stage dominance algorithm. This approach was originally published in a conference paper at AAAI \cite{kletzander2021branch}.

Although \gls{bp} gives very good results for small- to medium-sized instances, exact methods are computationally too expensive for large instances, and heuristic methods cannot obtain optimal solutions. Therefore, the study of new methods to tackle larger instances is of particular interest. In this paper, we present a novel approach based on the \acrfull{lns} framework, which has been successfully used for solving other challenging real-life problems. However, applying this method to the \gls{bdsp} requires innovative ideas for the destroy and repair operators, as well as a detailed investigation into their impact and the choice of parameters. In particular, \gls{bp} (or the \gls{cg} part of \gls{bp}) is used as the repair operator, as it provides high-quality solutions for small sub-problems very quickly. This approach was originally published in a conference paper at ICAPS \cite{mazzoli2024investigating}.

However, it is possible to see \gls{bp} and \gls{lns} not only as separate components, but to enforce a tighter integration between the two. Therefore, this article is a substantial extension of the previous conference papers, investigating how to boost performance even further by allowing exchange between the different sub-problems of \gls{lns}, in particular, by storing columns from sub-problems for use in later, different sub-problems, and by using the combined set of columns that are aggregated from different sub-problems to find better solutions even more quickly. This leads to significant improvements in the results, establishing this hybrid as the new state-of-the-art method for this version of the \gls{bdsp}.
 
The main contributions of this paper are:
\begin{itemize}
    \item A \acrfull{bp} approach provides an exact solution method that can solve small instances optimally, and provide high-quality solutions for medium size.
    \item Several improvements in solving the \acrfull{rcspp} in general are proposed and evaluated, which allow solving high-dimensional \glspl{rcspp} much faster.
	\item A novel \acrfull{lns} is proposed method, including innovative destroy operators, using the \gls{cg} part of our \gls{bp} as a repair.
    \item A detailed analysis of the effect of the different destroy and repair components of \gls{lns} is provided.
    \item We propose a novel tight integration of \gls{lns} and \gls{cg}, where columns from each sub-problem are aggregated and reused to significantly improve the search.
    \item A detailed evaluation is provided on the complex Austrian \gls{bdsp} using a public benchmark instance set, where our approach clearly constitutes the new state of the art, outperforming previous results across all sizes of instances. While this evaluation is specific to this version of the problem, the ideas used in \gls{bp} and \gls{lns} and their integration are generally applicable for complex personnel scheduling problems. 
\end{itemize}

The remainder of this article is structured as follows. Section~\ref{sec:rw} provides related work to the problem and methods, Section~\ref{sec:ProblDef} introduces the formal definition of the problem, Section~\ref{sec:bp} contains the details about \acrlong{bp}, Section~\ref{sec:Algo} explains the details about \acrlong{lns}, Section~\ref{sec:integration} provides the tight integration of the two approaches, Section~\ref{sec:results} contains the experiments for the different approaches and their detailed analysis, and finally Section~\ref{sec:conclusion} provides conclusions and an outlook to future work.

\section{Related Work}
\label{sec:rw}

There are many versions of employee scheduling problems and several surveys \cite{ernststaff2004,vandenberghpersonnel2013} provide a good overview of this line of work. Driver scheduling, as a part of crew scheduling, resides between vehicle scheduling and driver rostering in the process of operating bus transport systems \cite{ibarra-rojasplanning2015}.

Research on bus driver scheduling problems has a long history \cite{fandelbus1995} and uses a variety of solution methods. Exact methods mostly use column generation with a set covering or set partitioning master problem and a resource constrained shortest path subproblem  \cite{smith1988bus,desrocherscolumn1989,portugaldriver2009,lincolumn2016}. 

Heuristic methods like greedy \cite{martelloheuristic1986,deleonebus2011,tothefficient2013} or exhaustive \cite{chencrew2013} search, tabu search \cite{lourencomultiobjective2001,fandeltabu2001}, genetic algorithms \cite{lourencomultiobjective2001,lifuzzy2003}, or iterated assignment problems \cite{constantino2017solving} are used in different variations.

However, most work so far focuses mainly on cost, rarely minimizing idle time and vehicle changes \cite{ibarra-rojasplanning2015,constantino2017solving}. Break constraints are mostly simple, often including just one meal break. Complex break scheduling within shifts has been considered by authors in different contexts \cite{beerscheduling2008,beerai-based2010,widlbreak2014}. There is not much work on multi-objective bus driver scheduling \cite{lourencomultiobjective2001}, but multi-objective approaches are used in other bus operation problems \cite{respicio2013enhanced}. Recent work includes automated weight setting \cite{kletzander2023dynamic} and decision support for human planners \cite{frohner2024decision}.

This article investigates a complex real-life Bus Driver Scheduling Problem that was first introduced by \cite{kletzander2020solving}. This work explains the need for a combined objective function that goes beyond cost. The published instances were solved with simulated annealing and a hill climbing heuristic. Good results have also been provided using tabu search \cite{Kletzander2022} and hyper-heuristics \cite{Kletzander2022,Kletzander2023last}, while the state-of-the-art heuristic solutions beside the methods in this article are achived using Construct, Solve, Merge, and Adapt \cite{Rosati2023}.

Branch and Price is a decomposition technique for large mixed integer programs \cite{barnhart1998branch}, where a master problem works on a set of columns, while a sub-problem is responsible for generating new columns with negative reduced cost, i.e., the potential to improve the solution to the master problem. This work uses set partitioning \cite{balas1976set} as the master problem and the RCSPP \cite{irnich2005shortest} as the subproblem. Resources are modeled via resource extension functions (REF) \cite{irnich2008resource}. Different techniques for dealing with pareto-front calculation can mostly be found as maxima-finding algorithms in a geometrical context \cite{bentley1980multidimensional,chen2012maxima}. Large neighborhood search \cite{Shaw1998} is an iterative solution method, where in each iteration a part of the solution is destroyed and rebuilt by dedicated destroy and repair operators.

\section{Problem Description} \label{sec:ProblDef}

The investigated \acrfull{bdsp} deals with the assignment of bus drivers to vehicles that already have a predetermined route for one day of operation. The problem specification was introduced by \cite{kletzander2020solving}. 
\subsection{Problem Input}
The input of the \gls{bdsp} consists of three pieces of data:
\begin{itemize}
    \item \textbf{Positions and Distance Matrix} A finite set $P
\subseteq \mathbb{N}$ of \textit{positions}. A time distance matrix $D=(d_{pq})\in \mathbb{R}^{(P\times P)}$ where $d_{pq}$ represents the time needed for an driver to go from position $p$ to $q$ when not actively driving a bus. If no transfer is possible, then we set $d_{pq}$ to a big constant $M$. If $p\neq q$, then $d_{pq}$ is called {\em passive ride time}, whereas $d_{pq}$ represents the time it takes to switch tour at the same position, but is not considered passive ride time. 
    
    \item \textbf{Start and End Work}: For each position $p\in P$, two values $\param{startWork}_p$ and $\param{endWork}_p$ represent respectively the time required to start or end a shift at that position. 
   
    \item \textbf{Bus Legs}: A set $L$ of \textit{bus legs}, where  each leg  $\ell\in L$ is a $5$-tuple:
\[
 \ell =(\param{tour}_\ell,\param{startPos}_\ell, \param{endPos}_\ell, \param{start}_\ell, \param{end}_\ell),
\]
representing the trip of a vehicle between two stops at a certain time: 
\begin{itemize}
    \item $\param{tour}_\ell$ is the ID of the vehicle
    \item $\param{startPos}_\ell, \param{endPos}_\ell\in P$ are respectively the starting and the ending positions of the leg 
    \item $\param{start}_\ell\in \mathbb{R}$ is the time at which the vehicle departs from position $\param{startPos}_\ell$ 
    \item $\param{end}_\ell\in \mathbb{R}$  is the time at which the vehicle arrives to position $\param{endPos}_\ell$
\end{itemize}

\noindent Legs with the same tour $t$ do not overlap, which means that the intervals $(\param{start}_\ell, \param{end}_\ell) $ for $\ell$ with $\param{tour}_\ell = t$ are disjoint.

\end{itemize}

\noindent Note that $L$ is totally ordered by $\param{start}$, using $\param{tour}$ as tie-breaker. Therefore, the following result holds.

\begin{lemma}\label{lemma:total_order}
Let $\preceq$ be the relation on $L$ defined as follows: for any $\ell_1,\ell_2 \in L$, write $\ell_1\preceq \ell_2$ if either 
\begin{itemize}
    \item $\param{start}_{\ell_1} <\param{start}_{\ell_2} $, or 
    \item $\param{start}_{\ell_1} = \param{start}_{\ell_2}$ and $ \param{tour}_{\ell_1}  \le \param{tour}_{\ell_2}$
\end{itemize}
Then  $\preceq$ is an order relation, and  $(L, \preceq)$ is a totally ordered set.
\end{lemma}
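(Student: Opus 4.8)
The plan is to verify the four defining properties of a total order — reflexivity, transitivity, totality (comparability), and antisymmetry — and to notice that three of them are essentially automatic, while antisymmetry is the only place where the structure of the problem genuinely enters. The cleanest way to organize this is to phrase $\preceq$ as a pullback. I would define the key map $\phi : L \to \mathbb{R} \times \mathbb{N}$ by $\phi(\ell) = (\param{start}_\ell, \param{tour}_\ell)$ and equip $\mathbb{R} \times \mathbb{N}$ with the lexicographic order $\le_{\mathrm{lex}}$, so that by construction $\ell_1 \preceq \ell_2$ holds exactly when $\phi(\ell_1) \le_{\mathrm{lex}} \phi(\ell_2)$.

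Since $\mathbb{R}$ and the set of tour identifiers are each totally ordered, $\le_{\mathrm{lex}}$ is a total order, and the pullback of any total order along a map is automatically reflexive, transitive, and total. Reflexivity is immediate from $\phi(\ell) \le_{\mathrm{lex}} \phi(\ell)$; transitivity and totality follow by transporting the corresponding properties of $\le_{\mathrm{lex}}$ through $\phi$, which amounts to a routine lexicographic case split on whether the two start times coincide. I would dispatch all three of these quickly, without belaboring the bookkeeping.

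The real content is antisymmetry, which here is exactly the injectivity of $\phi$. Assuming $\ell_1 \preceq \ell_2$ and $\ell_2 \preceq \ell_1$, neither strict inequality $\param{start}_{\ell_1} < \param{start}_{\ell_2}$ nor its reverse can hold, forcing $\param{start}_{\ell_1} = \param{start}_{\ell_2}$; the second clause of the definition then gives $\param{tour}_{\ell_1} \le \param{tour}_{\ell_2}$ and $\param{tour}_{\ell_2} \le \param{tour}_{\ell_1}$, hence $\param{tour}_{\ell_1} = \param{tour}_{\ell_2}$. At this point $\ell_1$ and $\ell_2$ share a tour and a start time, and I would invoke the non-overlap hypothesis stated just before the lemma: legs of a common tour have pairwise disjoint open intervals $(\param{start}_\ell, \param{end}_\ell)$. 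Two nonempty such intervals sharing a left endpoint cannot be disjoint, so $\ell_1$ and $\ell_2$ must be the same leg, whence all five components agree and $\ell_1 = \ell_2$.

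The main obstacle — really the only subtlety — is recognizing that the disjointness assumption is precisely what is needed, and is being used, in the antisymmetry step. Without it, $\phi$ need not be injective, and $\preceq$ would degrade to a total preorder rather than a total order, since two distinct legs with equal start and tour would be mutually $\preceq$-comparable yet unequal. Pinning the argument on that hypothesis, together with the observation that equality of the pair $(\param{start}_\ell, \param{tour}_\ell)$ forces equality of the full $5$-tuple, is the crux; the remaining work is standard lexicographic verification.
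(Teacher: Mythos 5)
Your proof is correct, and while the overall skeleton (verify reflexivity, transitivity, totality, antisymmetry) matches the paper, you justify the crucial step differently. The paper simply \emph{asserts} as a property of the dataset that the pair $(\param{start}_\ell,\param{tour}_\ell)$ uniquely identifies a bus leg, and then runs the same case analysis you do; antisymmetry follows immediately from that assertion. You instead \emph{derive} this injectivity from the non-overlap hypothesis stated before the lemma: two distinct legs of the same tour must have disjoint open intervals, and two nonempty intervals sharing a left endpoint cannot be disjoint. This makes your proof more self-contained --- the lemma genuinely follows from the stated problem data rather than from an extra unproved observation --- and your pullback framing via $\phi(\ell)=(\param{start}_\ell,\param{tour}_\ell)$ and the lexicographic order cleanly isolates antisymmetry (i.e., injectivity of $\phi$) as the only nontrivial point, dispatching the other three axioms wholesale rather than by the paper's four-case bookkeeping. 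The one caveat is that your disjointness argument needs each interval $(\param{start}_\ell,\param{end}_\ell)$ to be nonempty, i.e., $\param{end}_\ell>\param{start}_\ell$; this is physically obvious but never explicitly stated in the problem description (and Table~\ref{tab:example} even contains a leg with $\param{end}_\ell<\param{start}_\ell$, presumably a typo), which is likely why the paper chose to postulate uniqueness of $(\param{start}_\ell,\param{tour}_\ell)$ directly instead. You should state that positivity assumption explicitly; with it, your argument is complete and arguably tighter than the original.
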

\begin{proof}We first note that in the dataset, each pair $(\param{start}_\ell,\param{tour}_\ell)$ uniquely identifies a bus leg. That is, if two legs share the same start time and tour index, they are the same element of $L$.
\begin{itemize}
        \item \textit{Reflexivity}: For each $\ell \in L$ we have $\param{start}_{\ell}=\param{start}_{\ell}$ and $\param{tour}_{\ell}  = \param{tour}_{\ell}$. So $\ell \preceq \ell $.
   \item  \textit{Antisymmetry} Let $\ell_1,\ell_2 \in L$ be such that $\ell_1\preceq \ell_2$ and $\ell_2\preceq \ell_1$. This implies that $\param{start}_{\ell_1}=\param{start}_{\ell_2}$, $\param{tour}_{\ell_1}  \le \param{tour}_{\ell_2}$, and $\param{tour}_{\ell_2}  \le \param{tour}_{\ell_1}$. So $\ell_1 = \ell_2$.
   \item \textit{Transitivity} Let $\ell_1,\ell_2, \ell_3 \in L$ be such that  $\ell_1 \preceq \ell_2$, and $\ell_2 \preceq \ell_3$. Then there are four cases:
   \begin{enumerate}
       \item  $\param{start}_{\ell_1} <\param{start}_{\ell_2}<\param{start}_{\ell_3}$. Then $\param{start}_{\ell_1} < \param{start}_{\ell_3}$ and therefore $\ell_1 \preceq \ell_3$.
       \item  $\param{start}_{\ell_1} =\param{start}_{\ell_2}<\param{start}_{\ell_3}$ so $\ell_1 \preceq \ell_3$.
              \item  $\param{start}_{\ell_1} <\param{start}_{\ell_2}=\param{start}_{\ell_3}$ so $\ell_1 \preceq \ell_3$.
       
       \item $\param{start}_{\ell_1} =\param{start}_{\ell_2}=\param{start}_{\ell_3}$ and $\param{tour}_{\ell_1}\le \param{tour}_{\ell_2} \le \param{tour}_{\ell_3}$. This implies $\ell_1 \preceq \ell_3$.
   \end{enumerate}
   \item \textit{Totally ordered}: For any $\ell_1, \ell_2 \in L$, exactly one of the following holds:  
   \begin{itemize}
       \item   $\param{start}_{\ell_1} < \param{start}_{\ell_2}$,  
   \item  $\param{start}_{\ell_1} > \param{start}_{\ell_2}$, or  
   \item $\param{start}_{\ell_1} = \param{start}_{\ell_2}$
   \end{itemize}
   In the first two cases, $\ell_1 \preceq \ell_2$ or $\ell_2 \preceq \ell_1$ follows directly.  If $\param{start}_{\ell_1} = \param{start}_{\ell_2}$, then $\param{tour}_{\ell_1} \leq \param{tour}_{\ell_2}$ or $\param{tour}_{\ell_1} \geq \param{tour}_{\ell_2}$. Thus, $\ell_1 \preceq \ell_2$ or $\ell_2 \preceq \ell_1$.  
   Hence, $(L, \preceq)$ is totally ordered.
 \qedhere
    \end{itemize}
\end{proof}
\begin{table}[ht]
	\centering
	\begin{tabular}{*6{c}}
		\toprule
		$\ell$ & $\mathit{tour}_\ell$ & $\mathit{start}_\ell$ & $\mathit{end}_\ell$ & $\mathit{startPos}_\ell$ & $\mathit{endPos}_\ell$ \\
		\midrule
		1 & 1 & 400 & 495 & 0 & 1 \\
		2 & 1 & 510 & 555 & 1 & 2 \\
		3 & 1 & 560 & 502 & 2 & 1 \\
		4 & 1 & 508 & 540 & 1 & 0 \\
		\bottomrule
	\end{tabular}
    \caption{A Bus Tour Example}
	\label{tab:example}
\end{table}

Table~\ref{tab:example} shows a short example of one particular bus tour. The vehicle starts at time $400$ (6:40 AM) at position $0$, does multiple legs between positions $1$ and $2$ with waiting times in between and finally returns to position $0$. A valid tour never has overlapping bus legs and consecutive bus legs satisfy $\mathit{endPos}_i=\mathit{startPos}_{i+1}$. A tour change occurs when a driver has an assignment of two consecutive bus legs $i$ and $j$ with $\mathit{tour}_i\neq\mathit{tour}_j$. 
\subsection{Solution}
A solution $S$ to the \gls{bdsp} is an assignment of drivers to bus legs. 	Formally, it is represented as a set  partition of $L$, expressed as $S =\{s_1,s_2,\dots, s_n\}$.  Each block $s_i$, is called a \textbf{shift}. In practical terms, a \textit{shift} corresponds to the work scheduled to be performed by a driver in one day~\cite{wren2004scheduling}.

For a given shifts $s$, we denote $L_s\subseteq  L$ as the subset of bus legs assigned to $s$. While $s$ and $L_s$ are mathematically equivalent, we conceptually distinguish them:  $L_s$ represents only a set of bus legs, whereas $s$ also contains additional details such as breaks, passive ride time, etc\dots. Those additional information are uniquely determined by the set of legs $L_s$, this means that, if necessary, the reader can think about $s$ and $L_s$ interchangeably.

From Lemma~\ref{lemma:total_order}, it follows that the notion of  \textit{consecutive} bus legs in a shift is well-defined.
 Note that a priori, the number of shifts of a solution $\abs{S}$ is not given. Nevertheless, we can imagine to set it as large as we need in order to get a feasible solution. An immediate upper bound is $\abs{S} \le \abs{L}$. This represents the situation in which each shift is composed by only one leg. 

Each shift $s\in S$ must be feasible according to the following criteria:
\begin{itemize}
	\item No overlapping bus legs are assigned to $s$.
	\item Changing tour or position between consecutive legs $i,j\in s$ requires \[\mathit{start}_j\ge\mathit{end}_i+d_{\mathit{endPos}_i,\mathit{startPos}_j}.\]
	\item The shift $s$ respects all hard constraints regarding work regulations as specified in the next section.
\end{itemize}
\subsection{Work and Break Regulations}
\begin{figure}[htp]
	\centering
	\begin{tikzpicture}
		\draw (0,-.1) rectangle (.5,.1);
		\draw[dashed] (.25,.2) -- (.25,.5) node[above] {\small start work};
		\draw (.5,-.25) rectangle (1.5,.25) node[pos=.5] {$\ell_1$};
		\draw (1.5,0) -- (2.5,0);
		\draw[dashed] (2,-.1) -- (2,-.4) node[below] {\small rest};
		\draw (2.5,-.25) rectangle (4,.25) node[pos=.5] {$\ell_2$};
		\draw (4,0) -- (4.5,0);
		\draw[dashed] (4.25,-.1) -- (4.25,-.4) node[below] {\small rest};
		\draw (4.5,-.1) rectangle (5.4,.1);
		\draw[dashed] (4.95,.2) -- (4.95,.5) node[above] {\small passive ride};
		\draw (5.4,-.25) rectangle (7,.25) node[pos=.5] {$\ell_3$};
		\draw (7,-.1) rectangle (7.25,.1);
		\draw[dashed] (7.125,.2) -- (7.125,.5) node[above] {\small end work};
		\node (wt) at (3.625,-1.5) {\small Working time $W_s$};
		\draw[dashed] (.25,-.2) edge[bend right=40] (wt.west);
		\draw[dashed] (1,-.3) edge[bend right=30] (wt.west);
		\draw[dotted] (2,-.9) edge[bend right=10] node[right] {\small ?} (wt.north west);
		\draw[dashed] (3.25,-.3) edge[bend right=10] (wt);
		\draw[dotted] (4.25,-.9) edge[bend left=10] node[left] {\small ?} ([xshift=4mm]wt.north);
		\draw[dashed] (4.95,-.2) edge[bend left=30] (wt);
		\draw[dashed] (6.2,-.3) edge[bend left=30] (wt.east);
		\draw[dashed] (7.125,-.2) edge[bend left=40] (wt.east);
		\node (dt) at (3.625,1.5) {\small Driving time $D_s$};
		\draw[dashed] (1,.3) edge[bend left=30] (dt.west);
		\draw[dashed] (3.25,.3) edge[bend left=10] (dt);
		\draw[dashed] (6.2,.3) edge[bend right=30] (dt.east);
		\draw[dashed] (0,-.2) -- (0,-2);
		\draw[dashed] (7.25,-.2) -- (7.25,-2);
		\draw[latex'-latex'] (0,-2) -- (7.25,-2) node[pos=.5,below] {\small Total time $T_s$};
	\end{tikzpicture}
	\caption{Example shift~\cite{kletzander2020solving}}
	\label{fig:shiftExample}
\end{figure}
Valid shifts for drivers are constrained by work regulations and require frequent breaks. First, different measures of time related to an employee $e$ containing the set of bus legs $L_s$ need to be distinguished, as visualised in Figure~\ref{fig:shiftExample}:
\begin{itemize}
	\item The total amount of driving time: $D_s=\sum_{i\in L_s}{\mathit{drive}_i}$.
	\item The span from the start of work until the end of work $T_s$ with a maximum of $T_\text{max}=\qty{14}{\hour}$.
	\item The working time $W_s=T_s-\mathit{unpaid}_s$, which does not include certain unpaid breaks.
\end{itemize}
\subsubsection{Driving Time Regulations.}
The maximum driving time is restricted to $D_\text{max}=\qty{9}{\hour}$. The whole distance $\mathit{start}_j-\mathit{end}_i$ between consecutive bus legs $i$ and $j$ qualifies as a driving break, including passive ride time. Breaks from driving need to be taken repeatedly after at most \qty{4}{\hour} of driving time. In case a break is split in several parts, all parts must occur before a driving block exceeds the \qty{4}{\hour} limit. Once the required amount of break time is reached, a new driving block starts. The following options are possible:
\begin{itemize}
	\item One break of at least \qty{30}{\minute};
	\item Two breaks of at least \qty{20}{\minute} each;
	\item Three breaks of at least \qty{15}{\minute} each.
\end{itemize}

\subsubsection{Working Time Regulations.}

The working time $W_s$ cannot exceed \qty{10}{\hour} and has a soft minimum of \qty{6.5}{\hour}. If the employee is working for a shorter period of time, the difference has to be paid anyway. 

A minimum rest break is required according to the following options:
\begin{itemize}
	\item $W_s<\qty{6}{\hour}$: no rest break;
	\item $\qty{6}{\hour}\leq W_s\leq\qty{9}{\hour}$: at least \qty{30}{\minute};
	\item $W_s>\qty{9}{\hour}$: at least \qty{45}{\minute}.
\end{itemize}

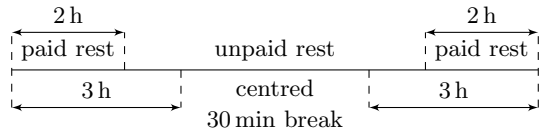
\begin{figure}[htp]
	\centering
	\begin{tikzpicture}
		\draw (0,0) -- (7,0) node[pos=.5,above] {\small unpaid rest};
		\draw[dashed] (0,-.5) -- (0,.5);
		\draw[dashed] (1.5,0) -- (1.5,.5);
		\draw[dashed] (2.25,0) -- (2.25,-.5);
		\draw[dashed] (4.75,0) -- (4.75,-.5);
		\draw[dashed] (5.5,0) -- (5.5,.5);
		\draw[dashed] (7,-.5) -- (7,.5);
		\draw[latex'-latex'] (0,.5) -- (1.5,.5) node[pos=.5,above] {\small \qty{2}{\hour}};
		\draw[latex'-latex'] (5.5,.5) -- (7,.5) node[pos=.5,above] {\small \qty{2}{\hour}};
		\draw[latex'-latex'] (0,-.5) -- (2.25,-.5) node[pos=.5,above] {\small \qty{3}{\hour}};
		\draw[latex'-latex'] (4.75,-.5) -- (7,-.5) node[pos=.5,above] {\small \qty{3}{\hour}};
		\node[above] at (.75,0) {\small paid rest};
		\node[above] at (6.25,0) {\small paid rest};
		\node[below,align=center] at (3.5,0) {\small centred\\\small \qty{30}{\minute} break};
	\end{tikzpicture}
	\caption{Rest break positioning~\cite{kletzander2020solving}}
	\label{fig:rest}
\end{figure}
\noindent
The rest break may be split into one part of at least \qty{30}{\minute} and one or more parts of at least \qty{15}{\minute}. The first part has to occur after at most \qty{6}{\hour} of working time.
Whether rest breaks are paid or unpaid depends on break positions according to Figure~\ref{fig:rest}. Every period of at least \qty{15}{\minute} of consecutive rest break is unpaid as long as it does not intersect the first 2 or the last 2 hours of the shift (a longer rest break might be partially paid and partially unpaid). The maximum amount of unpaid rest is limited: 
\begin{itemize}
	\item If 30 consecutive minutes of rest break are located such that they do not intersect the first \qty{3}{\hour} of the shift or the last \qty{3}{\hour} of the shift, at most \qty{1.5}{\hour} of unpaid rest are allowed;
	\item Otherwise, at most one hour of unpaid rest is allowed.
\end{itemize}
\noindent

\subsubsection{Shift split}
If a rest break is at least \qty{3}{\hour} long, it is instead considered a shift split, which is unpaid and does not count towards $W_s$. However, such splits are typically poorly regarded by the drivers. A shift split counts as a driving break, but does not contribute to rest breaks.
\subsection{Objective Function}
We minimize the objective function combining cost and employee satisfaction defined in previous work~\cite{kletzander2020solving}:
\begin{equation}
	\label{eq:objective}
	z(S)=\sum_{s \in S}\left(2\, W_s'+T_s+\mathit{ride}_s+30\, \mathit{change}_s+180\, \mathit{split}_s\right)
\end{equation}
\noindent
The objective function $z$ represents a linear combination of six criteria for each employee $e$. The actual paid working time $W_s'=\max\{W_s, 390\}$ is the main objective, and it is combined with the total time $T_s$ to reduce long unpaid periods for employees. The next sub-objectives reduce the passive ride time $\mathit{ride}_s$ and the number of tours changes $\mathit{change}_s$, which is beneficial for both employees and efficient schedules. The last objective aims to reduce the number of split shifts $\mathit{split}_s$ as they are very unpopular.
The weights were determined by previous work~\cite{kletzander2020solving} based on preferences agreed by different stakeholders at Austrian bus companies and employee scheduling experts.

\section{Branch and Price}
\label{sec:bp}

\begin{algorithm}[t]
    \caption{Branch and Price}
    \label{alg:bp}
    \KwIn{Problem instance}
    \KwOut{Best solution (optimal if no timeout occurs)}

    $columns\gets$ initialize columns\;
    $best\gets\infty$\;
    $nodes \gets \{\mathrm{root}\}$\;
    \While{nodes is not empty and no timeout}{
        $node, columns\gets\mathrm{choose}(nodes,columns)$\;
        \If{$node_{lb}\geq best$}{
            \textbf{continue}\;
        }
        \While{no timeout}{
            $rmp,~duals\gets$ solve relaxed master problem($columns$)\;
            $new\_columns\gets$ solve subproblem($duals$)\;
            \If{new\_columns is empty}{
                \textbf{break}\;
            }
            $columns\gets columns\cup new\_columns$\;
        }
        \If{timeout}{
            \If{$best=\infty$}{
                $best\gets$ solve integer master problem($columns$)\;
            }
            \Return{$best$}\;
        }
        \eIf{rmp is integer}{
            $best\gets\min(rmp,best)$\;
        }{
            $best\gets\min(\text{solve integer master problem}(columns),best)$\;
            $nodes\gets nodes\cup{}$branch($rmp$)\;
        }
    }
    \Return{best}\;
\end{algorithm}

An exact solution to the \gls{bdsp} can be computed by \acrfull{bp} \cite{barnhart1998branch}. The general procedure is shown in Algorithm~\ref{alg:bp}. The outer loop (lines 4 to 29) iterates over the nodes in the branching tree. The inner loop (lines 9 to 16) performs \acrfull{cg} for each node. Set Partitioning is used as the master problem (line 10) and the \acrfull{rcspp} as the subproblem (line 11). The duals of the relaxed master problem are used to find new shifts that have the potential to improve the solution of the master problem. This is repeated until no columns with potential for improvement are found (lines 12 to 14). Branching occurs when the resulting solution is not integer (line 27). The process continues until all branches either result in integer solutions (lines 23 to 24) or are cut off by the current objective bounds (lines 6 to 8).

\subsection{Master Problem}
\label{sec:master}

The goal of the master problem is to select a subset of the shift set $\mathbf{S}$, such that each bus leg is covered by exactly one shift while minimizing total cost. This corresponds to the set partitioning problem:

\begin{align}
\label{eq:sp_cost}
\text{minimize } &\sum_{s\in\mathbf{S}}{\mathit{cost}_s\cdot x_s}\\
\label{eq:sp_cover}
\text{subject to } &\sum_{s\in\mathbf{S}}{\mathit{cover}_{s\ell}\cdot x_s}=1 &\forall\ell\\
\label{eq:sp_int}
& x_s\in\{0,1\} &\forall s
\end{align}

\noindent
Here $x_s$ is the variable for the selection of shift $s$. The objective \eqref{eq:sp_cost} minimizes the total cost, Equation~\eqref{eq:sp_cover} states that each bus leg needs to be covered exactly once (using $\mathit{cover}_{s\ell}\in\{0,1\}$ to indicate whether shift $s$ covers leg $\ell$), and Equation~\eqref{eq:sp_int} states the integrality constraint. This constraint is relaxed to $0\leq x_s\leq1$ for the relaxed master problem which is repeatedly solved at each node of the branching tree (line 10 in Algorithm~\ref{alg:bp}). Instead of the full set of possible shifts $\mathbf{S}$, a subset $columns$ is maintained by the algorithm. Once no more new shifts can be found by the subproblem, the result of the relaxed master problem provides a local lower bound for the solution of the integer problem.

In some cases, the resulting solution might already be integer (line 23). Usually, however, the result will be fractional. Then, the integer version of the master problem is solved with the current set of columns (line 26). While there are no guarantees regarding the quality of the integer solution in this case, in practice, solutions are often very close to the lower bound already. In any case, the result from the integer master problem is a feasible solution that provides a global upper bound for the problem. If the solution is not integer, branching will be done, resuming calculation on one of the open branches.

\subsection{Subproblem}

The goal of the subproblem (line 11 in Algorithm~\ref{alg:bp}) is to find the column (shift) with the lowest reduced cost. For the \gls{bdsp} this corresponds to the \acrfull{rcspp} on an acyclic graph. In this problem, a graph $G=(\mathbf{N},\mathbf{A})$ is given where $\mathbf{N}$ is the set of $n$ nodes, corresponding to all bus legs, a source node $s$, and a target node $t$, and $\mathbf{A}$ is the set of arcs, which connect bus legs that can be scheduled consecutively in the same shift. As the bus legs are naturally ordered by time, the \gls{rcspp} is acyclic.

Each node and arc is associated with a cost and an $r$-dimensional resource vector representing the resource consumption when using the node or arc. A shift is defined as a path from $s$ to $t$ such that the path satisfies all feasibility criteria associated with the resources. Each node corresponding to a bus leg is associated with a dual from the master problem. The reduced cost of a path is therefore the cost of a path minus the sum of the duals along the path. The \gls{rcspp} aims at finding the least-cost feasible path from $s$ to $t$.

There is an arc from node $s$ to each node $i$ except $i=t$, and there is an arc from each node $i$ except $i=s$ to node $t$. Nodes corresponding to bus legs $i$ and $j$ are only connected by an arc $ij$ if chaining the bus legs is feasible according to their times and the distance matrix $d$. Building the graph is done once per instance in $O(n^2)$. Figure~\ref{fig:rcspp} shows such a graph for a small toy instance, skipping most arcs from $s$ and to $t$ for visual clarity. The remainder of this section goes into the details of this graph, its costs, and its constraints. Due to the complex regulations, solving this sub-problem is very challenging and requires several novel optimizations.

\begin{figure}[htp]
\centering
\begin{tikzpicture}[node distance=2 cm, auto]
\node[draw,circle] at (0,0) (s) {$s$};
\node[draw,circle,label=above:{$63$}] at (1,1.5) (v1) {$0$};
\node[draw,circle,label=above:{$111$}] at (2,1.5) (v2) {$1$};
\node[draw,circle,label=below:{$111$}] at (2.5,0) (v3) {$2$};
\node[draw,circle,label=above:{$90$}] at (3,-1.5) (v4) {$3$};
\node[draw,circle,label=above:{$87$}] at (3.5,1.5) (v5) {$4$};
\node[draw,circle,label=above:{$144$}] at (4,-1.5) (v6) {$5$};
\node[draw,circle,label=below:{$105$}] at (4.5,0) (v7) {$6$};
\node[draw,circle,label=above:{$84$}] at (5,1.5) (v8) {$7$};
\node[draw,circle,label=above:{$60$}] at (5.5,-1.5) (v9) {$8$};
\node[draw,circle,label=below:{$105$}] at (6,0) (v10) {$9$};
\node[draw,circle] at (7.5,0) (t) {$t$};

\path[line] (s) edge node {$45$} (v1);
\path[line] (v1) edge node {$78$} (v2);
\path[line] (s) edge node {$45$} (v3);
\path[line] (s) edge node[swap] {$45$} (v4);
\path[line] (v2) edge node {$57$} (v5);
\path[line] (v1) edge[bend left=15] node[pos=.25,left] {$394$} (v6);
\path[line] (v4) edge node[swap] {$18$} (v6);
\path[line] (v3) edge node {$72$} (v7);
\path[line] (v5) edge node {$78$} (v8);
\path[line] (v4) edge[bend right=60] node[swap] {$255$} (v9);
\path[line] (v6) edge node[swap] {$42$} (v9);
\path[line] (v2) edge node[pos=.3,below] {$489$} (v10);
\path[line] (v5) edge node[pos=.75,above] {$296$} (v10);
\path[line] (v7) edge node[swap] {$72$} (v10);
\path[line] (v8) edge node {$30$} (t);
\path[line] (v9) edge node[swap] {$30$} (t);
\path[line] (v10) edge node[swap] {$30$} (t);
\end{tikzpicture}
\caption{RCSPP graph for a Toy Instance}
\label{fig:rcspp}
\end{figure}
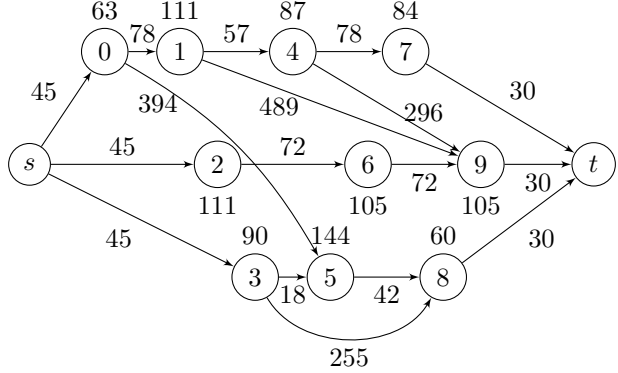

\subsubsection{Costs}
The costs for nodes and arcs are based on objective \eqref{eq:objective}. The cost $c_i$ for each node $i$ corresponding to a bus leg $i$ is defined as $c_i=3\cdot\mathit{length}_i$ as each bus leg contributes its length to both $W_s$ (weight 2) and $T_s$ (weight 1). By definition, $c_s=c_t=0$. 

Several helpful properties of arcs from node $i$ to node $j$ are defined and used for defining the arc costs:
{\small
\begin{align}
    \label{eq:length}
    \mathit{length}_{ij} &= \begin{cases}\mathit{start}_j-\mathit{end}_i & \text{if }i\neq s\land j\neq t\\
    \mathit{startWork}_{\mathit{startPos}_j} & \text{if }i=s\\
    \mathit{endWork}_{\mathit{endPos}_i} & \text{if }j=t
    \end{cases}
    \\
    \label{eq:ride}
    \mathit{ride}_{ij} &= \begin{cases}d_{\mathit{endPos}_i,\mathit{startPos}_j} & \text{if }i\neq s\land j\neq t\land\mathit{endPos}_i\neq\mathit{startPos}_j\\
    0 & \text{otherwise}
    \end{cases}
    \\
    \label{eq:change}
    \mathit{change}_{ij} &= \begin{cases}1 & \text{if }i\neq s\land j\neq t\land\mathit{tour}_i\neq\mathit{tour}_j\\
    0 & \text{otherwise}
    \end{cases}
    \\
    \label{eq:split}
    \mathit{split}_{ij} &= \begin{cases}1 & \text{if }i\neq s\land j\neq t\land\mathit{length}_{ij}-\mathit{ride}_{ij}\geq3\cdot60\\
    0 & \text{otherwise}
    \end{cases}
    \\
    \label{eq:remain}
    \mathit{remain}_{ij} &= \begin{cases}\mathit{length}_{ij}-\mathit{ride}_{ij} & \text{if }\mathit{split}_{ij}=0\\
    0 & \text{otherwise}
    \end{cases}
    \\
    \label{eq:rest}
    \mathit{rest}_{ij} &= \begin{cases}\mathit{remain}_{ij} & \text{if }i\neq s\land j\neq t\land\mathit{remain}_{ij}\geq15\\
    0 & \text{otherwise}
    \end{cases}
\end{align}
}%

\noindent
Equation~\eqref{eq:length} defines the length of an arc, taking into account start and end arcs. Equation~\eqref{eq:ride} states the passive ride time, Equation~\eqref{eq:change} whether a tour change occurs, Equation~\eqref{eq:split} whether the arc corresponds to a shift split, Equation~\eqref{eq:remain} captures the remaining arc length after removing the passive ride time and the shift split time, and Equation~\eqref{eq:rest} captures a potential rest break.

The cost $c_{ij}$ for arc $ij$ (i.e., bus leg $i$ to $j$) is defined as
\begin{equation}
    \label{eq:arc_cost}
    c_{ij}=2\cdot\mathit{remain}_{ij}+\mathit{length}_{ij}+3\cdot\mathit{ride}_{ij}+30\cdot\mathit{change}_{ij}+180\cdot\mathit{split}_{ij}
\end{equation}
\noindent
Note that unpaid rest cannot be determined at this point, therefore all rest is treated as paid for this computation. Unpaid rest is separately treated when solving the subproblem. $\mathit{ride}_{ij}$ contributes to both working time $W_s$ and the additional passive ride penalty.

\subsubsection{Resources}

The solution method is a label-setting algorithm \cite{irnich2005shortest}. Due to the acyclic nature of the graph, each node can be processed in temporal order, starting with an initial label representing an empty path at $s$ where each resource usage is 0. For each node $i$ and each label $x$ on that node, all arcs $ij$ are processed and a label $y$ is placed at the end node $j$ of the arc unless some constraints are violated. Therefore, each label $x$ represents a path from $s$ to its node $i$, capturing the nodes in the path, the cost, and the resource usage. In total, eleven resources need to be tracked in order to satisfy the BDS constraints. The first resources are classical additive resources with a maximum allowed usage.
\begin{align}
    \label{eq:dy}
    d_y &= d_x+\mathit{drive}_j\\
    \label{eq:sy}
    s_y &= s_x+\mathit{length}_{ij}+\mathit{length}_j
\end{align}

\noindent
Equation~\eqref{eq:dy} tracks driving time, Equation~\eqref{eq:sy} the span.
{\small
\begin{align}
    \label{eq:rdy}
    \mathit{rd}_y &= \begin{cases}\mathit{true} & \text{if }\mathit{length}_{ij}\geq30\lor (\mathit{length}_{ij}\geq20\land\mathit{b20}_x\geq1)\lor (\mathit{length}_{ij}\geq15\land\mathit{b15}_x\geq2)\\
    \mathit{false} & \text{otherwise}
    \end{cases}
    \\
    \label{eq:dcy}
    \mathit{dc}_y &= \begin{cases}0 & \text{if }\mathit{rd}_y\\
    \mathit{dc}_x+\mathit{drive}_j & \text{otherwise}
    \end{cases}
    \\
    \label{eq:b15y}
    \mathit{b15}_y &= \begin{cases}0 & \text{if }\mathit{rd}_y\\
    \mathit{b15}_x+1 & \text{if }\neg\mathit{rd}_y\land\mathit{length}_{ij}\geq15\\
    \mathit{b15}_x & \text{otherwise}
    \end{cases}
    \\
    \label{eq:b20y}
    \mathit{b20}_y &= \begin{cases}0 & \text{if }\mathit{rd}_y\\
    \mathit{b20}_x+1 & \text{if }\neg\mathit{rd}_y\land\mathit{length}_{ij}\geq20\\
    \mathit{b20}_x & \text{otherwise}
    \end{cases}
\end{align}
}%

\noindent
Resources for monitoring drive breaks need to be reset at each full drive break. Equation~\eqref{eq:rdy} defines a helping flag to indicate whether the current drive block is finished. Equation~\eqref{eq:dcy} uses this flag to reset or increase the current driving time. Equation \eqref{eq:b15y} tracks the number of 15-minute breaks in the current driving block and Equation~\eqref{eq:b20y} the number of 20-minute breaks. 30-minute breaks do not need to be tracked as each of them resets the driving block.

Constraints regarding rest breaks need to consider different sums of rest breaks and their positioning.
{\small
\begin{align}
    \label{eq:wy}
    w_y &= w_x+u_x-u_y+\mathit{remain}_{ij}+\mathit{ride}_{ij}+\mathit{length}_j\\
    \label{eq:ry}
    r_y &= \min(r_x+\mathit{rest}_{ij},~45)\\
    \label{eq:b30y}
    \mathit{b30}_y &= \mathit{b30}_x\lor\mathit{rest}_{ij}\geq30\\
    \label{eq:restp}
    \mathit{rest}_{ij}' &= \mathit{rest}_{ij}-\max(2\cdot60-s_x,~0)-\max(\mathit{end}_i+\mathit{rest}_{ij}-(\mathit{end}_y-2\cdot60),~0)\\
    \label{eq:uy}
    u_y &= \min\left(u_x+\begin{cases}\mathit{rest}_{ij}' & \text{if }\mathit{rest}_{ij}'\geq15\\
    0 & \text{otherwise}
    \end{cases},~90\right)\\
    \label{eq:bc30y}
    \mathit{bc30}_y &= \mathit{bc30}_x\lor\mathit{rest}_{ij}''\geq30
\end{align}
}%

\noindent
Equation~\eqref{eq:wy} tracks working time, assuming that all of $u_y$ is unpaid so far, which constitutes a lower bound for the actual value. Equation~\eqref{eq:ry} tracks the amount of required rest time, capping it at 45 as higher values do not matter. Equation~\eqref{eq:b30y} deals with the occurrence of a 30-minute rest break.

\begin{figure}[htp]
\centering
\begin{subcaptionblock}{.45\textwidth}
\begin{tikzpicture}[node distance=2 cm, auto]
\node[draw,circle] at (0,0) (s) {$s$};
\node[draw,circle] at (1,.5) (v1) {$0$};
\node[draw,circle] at (2,-.5) (v2) {$1$};
\node[draw,circle] at (3,.5) (v3) {$2$};
\node[draw,circle] at (4,.5) (v4) {$3$};
\node[draw,circle] at (5,0) (t) {$t$};

\path[line] (s) edge (v1);
\path[line] (s) edge (v2);
\path[line] (s) edge[bend left=50] (v3);
\path[line] (s) edge[bend left=60] (v4);
\path[line] (v1) edge (v2);
\path[line] (v1) edge (v3);
\path[line] (v2) edge (v4);
\path[line] (v3) edge (v4);
\path[line] (v1) edge[bend left=60] (t);
\path[line] (v2) edge (t);
\path[line] (v3) edge[bend left=55] (t);
\path[line] (v4) edge (t);
\end{tikzpicture}
\caption{Original graph}
\end{subcaptionblock}
\begin{subcaptionblock}{.45\textwidth}
\begin{tikzpicture}[node distance=2 cm, auto]
\node[draw,circle] at (0,0) (s) {$s$};
\node[draw,circle,inner sep=2pt] at (1,2) (v00) {$0_0$};
\node[draw,circle] at (1,1) (v0) {$0$};
\node[draw,circle] at (2,.5) (v1) {$1$};
\node[draw,circle,inner sep=2pt] at (3,1) (v23) {$2_3$};
\node[draw,circle,inner sep=2pt] at (4,1) (v33) {$3_3$};
\node[draw,circle,inner sep=2pt] at (3,-.5) (v22) {$2_2$};
\node[draw,circle,inner sep=2pt] at (1,-1) (v01) {$0_1$};
\node[draw,circle,inner sep=2pt] at (2,-1.5) (v11) {$1_1$};
\node[draw,circle] at (5,0) (t) {$t$};

\path[line] (s) edge[bend left=20] (v00);
\path[line] (v00) edge[bend left=60] (t);
\path[line] (s) edge (v0);
\path[line] (s) edge (v1);
\path[line] (v0) edge (v1);
\path[line] (s) edge[bend left=65] (v23);
\path[line] (s) edge[bend right=20] (v33);
\path[line] (v0) edge (v23);
\path[line] (v1) edge[bend right=20] (v33);
\path[line] (v23) edge (v33);
\path[line] (v33) edge (t);
\path[line] (s) edge (v22);
\path[line] (v0) edge[bend right=20] (v22);
\path[line] (v22) edge (t);
\path[line] (s) edge (v01);
\path[line] (s) edge[bend left=15] (v11);
\path[line] (v01) edge (v11);
\path[line] (v11) edge (t);
\end{tikzpicture}
\caption{Extended graph}
\end{subcaptionblock}
\caption{Extension of \gls{rcspp} graph to deal with distance to shift end}
\label{fig:rcspp_extension}
\end{figure}

Equation~\eqref{eq:restp} captures the part of the rest break that can be unpaid depending on the position. However, this requires knowing the end time of the shift $\mathit{end}_y$ which violates the general assumption of the algorithm that nodes can be processed in temporary order. Therefore, for each node $j$, with known end time $\mathit{end}_j$ if $j$ is the last bus leg in the shift, new nodes are added to the graph: $\mathbf{N}_j$ is the set of nodes reachable within 3 hours from $\mathit{end}_j$ when traversing the network backwards from $j$ (including $j$ itself). For each node $i\in\mathbf{N}_j$ a new node $i_j$ is created. For each pair of nodes $i$ and $k$ both in $\mathbf{N}_j$, an arc $i_jk_j$ is added if $ik$ is in the original graph. For $i\not\in\mathbf{N}_j$ and $k\in\mathbf{N}_j$, an arc $ik_j$ is added if $ik$ is in the original graph. The arc $j_jt$ replaces the arc $jt$. Each new node is associated with the end time $\mathit{end}_j$. All original nodes have end time $\mathit{\infty}$. This solves the problem, but increases the graph size by a factor of 6 for small instances up to more than 30 for large instances.

Figure~\ref{fig:rcspp_extension} shows this extension on a very small graph, assuming that only node $n-1$ is in the 3 hour window ending at the end of node $n$. The extended graph contains sub-networks for each final node, e.g., at node $s$ going to node $0_1$ locks into a path ending with node $1_1$, while going to $0$ keeps the choice open to end via $2_2$ or $3_3$, but not directly via $0_0$ or $1_1$ any more.

The potential total amount of unpaid rest is captured in Equation~\eqref{eq:uy}. Equation~\eqref{eq:bc30y} tracks the existence of a centered 30-minute break using $\mathit{rest}_{ij}''$ defined like in~\eqref{eq:restp} except with $3\cdot60$ instead of $2\cdot60$.

Finally, the cost needs to be computed.
\begin{align}
    \label{eq:costp}
    \mathit{cost}_y' &= \mathit{cost}_x'+2\cdot(u_x-u_y)+c_{ij}+c_j-\mathit{dual}_j\\
    \label{eq:cost}
    \mathit{cost}_y &= \mathit{cost}_y'+2\cdot\max(W_{min}-w_i,~0)
\end{align}

\noindent
Equation~\eqref{eq:costp} takes into account the unpaid rest and the duals for the bus legs. Equation~\eqref{eq:cost} further considers the minimum working time.

\subsubsection{Constraints}
A shift is a path from $s$ to $t$ that does not violate any resource constraints. The following resource constraints need to be satisfied for every label $x$: $d_x\leq D_{max}$, $s_x\leq T_{max}$, $\mathit{dc}_x\leq4\cdot60$, $w_x\leq W_{max}$, and $r_x\geq15$ if $w_x\geq6\cdot60$. Additionally, at node $t$, all labels $x$ need to satisfy: $r_x\geq45$ if $w_x>9\cdot60$ and $\mathit{b30}_x$ if $w_x\geq6\cdot60$.

\subsubsection{Dominance}

In order to track only paths with the potential to become minimum-cost paths, only pareto-optimal labels with respect to both cost and resource usage are kept at each node, i.e., labels which are not dominated. Regarding more complex constraints, it is important to observe that, if label $x$ dominates label $y$ at a node $i$, extensions of $x$ to future nodes must still dominate extensions of $y$. Finally, the least-cost label at node $t$ represents the minimum-cost path.

Therefore, a label $x$ dominates label $y$ if all following conditions are true: $d_x\leq d_y$, $s_x\leq s_y$, $\mathit{dc}_x\leq\mathit{dc}_y$, $\mathit{b20}_x\geq\mathit{b20}_y$, $\mathit{b15}_x\geq\mathit{b15}_y$, $w_x+u_x\leq w_y$, $r_x\geq r_y$, $\mathit{b30}_x\lor\neg\mathit{b30}_y$, $\mathit{bc30}_x\lor\neg\mathit{bc30}_y$, $c_x+2\cdot u_x\leq c_y$. 

In general, lower values dominate for resources with upper bounds and higher values dominate for resources with lower bounds. The most complex case arises from the fact that the maximum value of unpaid break might be 0, 60, or 90 depending on the existence and position of a 30-minute rest break. This results in neither higher nor lower values of $u_i$ being dominant. Rather, the uncertain amount of unpaid rest weakens the domination power of cost and working time, as those might vary in the amount of unpaid rest until the end of the path. However, in the next section a more useful way to deal with this problem is described.

\subsection{Branching}

Branching (line 27 in Algorithm~\ref{alg:bp}) is performed on the connections between bus legs in a shift that correspond to arcs in the subproblem. The branching considers a connection between bus legs $i$ and $j$ that appears fractionally in the solution to the relaxed master problem, and selects the most fractional (closest to $0.5$) to maximize impact. In the left branch, all columns containing $i$ or $j$, but not both consecutively, are removed. In the RCSPP graph all outgoing arcs from $i$ except for the one connecting to $j$ are removed. In the right branch, all columns with $i$ and $j$ assigned consecutively are removed, as well as the arc from $i$ to $j$ in the RCSPP graph.

\subsection{Lagrangean Bound}

In the case when the column generation terminates at the root node, small optimality gaps are typically achieved. However, when there is not enough time to complete the column generation at the root node, a Lagrangean lower bound can be computed as $O(\mathit{RMP})+\kappa\cdot O(\mathit{PP})$, where $O(\mathit{RMP})$ is the optimum of the reduced master problem, $\kappa$ is an upper bound on the number of shifts and $O(PP)$ is the optimum of the pricing problem. $\kappa$ can be bounded by $\kappa=\lfloor O(\mathit{RMP})/\mathit{minCost}\rfloor$,
where $\mathit{minCost}=2\cdot W_{min}+\min_{\ell\in\mathbf{L}}{\mathit{length}_\ell}$. The minimum reduced cost is only known when no more throttling (see next section) is in play. Otherwise a lower bound for the minimum reduced cost can be computed where the constraints regarding break positions for unpaid breaks are relaxed. However, these bounds turned out to be rather weak for the \gls{bdsp}.

\subsection{Handling the Subproblem Dimensionality}

As described in the previous section, due to the complex constraints for valid shifts, the subproblem complexity is very high. In particular, eleven resources are tracked in the labels. However, the efficiency of the label-setting algorithm depends on its ability to keep a small number of non-dominated labels. The more dimensions the labels have, the easier it is for labels to be non-dominated, drastically increasing the number of labels for processing. Therefore, the increase in the number of processed labels turns out to be the bottleneck of scaling the solution method to larger instances. Several novel improvements to reduce this bottleneck are introduced. These are generally applicable to other problems with similar characteristics.

\subsubsection{Subproblem Partitioning}

Instead of generating all possible shifts from one graph, the subproblem is split into three similar, but disjoint RCSPP problems, removing the uncertainty for unpaid breaks depending on whether the maximum amount of rest break is 0, 60, or 90. This depends on the existence and position of a 30-minute rest break:
\begin{itemize}
\item {\em No 30-minute rest break}: All arcs corresponding to rest breaks of at least 30 minutes can be removed, making the graph very spare and therefore fast to process. Unpaid rest is guaranteed to be 0, all rest break resources are ignored.
\item {\em Uncentered 30-minute rest break}: $\mathit{b30}_y$ must be true at $t$. The maximum of $u_y$ is changed to 60, but the current amount of $u_y$ is guaranteed to be unpaid. Therefore, $w_x\leq w_y$ and $w_x+u_x\leq w_y+u_y$ can be used instead of $w_x+u_x\leq w_y$ as the domination criterion (same change for $c_x$ and $c_y$), reducing the number of undominated labels.
\item {\em Centered 30-minute rest break:} $\mathit{b30}_y$ and $\mathit{b30c}_y$ must be true at $t$. The maximum of $u_y$ is set to 90, but the improved dominance criteria from the previous graph still apply as again all of $u_y$ is guaranteed to be unpaid.
\end{itemize}

\noindent
The pricing subproblem is used to add multiple columns (up to 1000 per graph) at once. Indeed, solving the relaxed master problem even with thousands of columns is much faster than solving the pricing subproblem. Therefore, accepting some unnecessary columns while reducing the number of times the subproblem is solved pays off. The different graphs are ordered by their complexity, measured by the number labels they expand. The pricing subproblem avoids searching more complex graphs, as long as those with less complexity still produce enough shifts with negative reduced costs (usually 10, 100 if the objective did not change, 1000 if the objective did not change repeatedly).

\subsubsection{Cost Bound}

An upper bound for the reduced cost for each node can be computed by processing the RCSPP graph backwards with the current duals. The upper bound describes the maximum reduced cost at each node, such that there is still a path to arrive at $t$ with reduced cost $<0$, disregarding resource constraints. All labels above this bound can be discarded during the solution process.

\subsubsection{Exponential Arc Throttling}

While it might be hard to find the best column, at least in the beginning of the solving process, there are many columns with negative reduced cost. Two options were considered to reduce the size of the subproblem in early iterations. The first option is to reduce the number of labels at each node. This can be done either by rejecting new labels after a certain threshold or only retaining labels according to certain criteria, e.g., the best 100 by cost.

The better option is to exploit the fact that good columns are unlikely to include very costly arcs. It proposes a new throttling mechanism that imposes a maximum cost per arc, starting at 100 (just enough for a 30 min break). This greatly reduces the size of the graph and therefore the number of labels. Once the number of new columns is small, this factor is multiplied by 2. This is repeated until all arcs are included. While a lot of arcs are not present in the early stage, the focus on arcs that are likely to appear in good columns leads to very fast convergence to good solutions, even if there is not enough time to finish the column-generation process.

\subsubsection{Improved Elimination of Non-Dominated Labels}

Even with the previously mentioned improvements, the majority of runtime is spent figuring out which labels are non-dominated. The naive approach is to compare each new label on a node with each label in the current set of non-dominated labels. However, the runtime complexity of this method is quadratic in the number of non-dominated labels. Therefore, two different methods are explored to speed up the dominance checks. The first method is the multi-dimensional divide-and-conquer \cite{bentley1980multidimensional}: It is based on recursively solving the $k$-dimensional problem with $n$ labels by two $k$-dimensional problems of size $n/2$ and one $k-1$-dimensional problem of size $n$.

The second method is a two-stage approach based on k-d trees and bounding boxes \cite{chen2012maxima}. The two-stage approach is more effective for several reasons. First, the algorithm is based on two stages. Instead of comparing each new label with all previously non-dominated labels both ways and deleting dominated labels in the process, two passes are performed. Labels are added as long as they are not dominated, but no removal operations are executed. The second pass is performed in opposite order before expanding the labels at a node, ensuring that only non-dominated labels are expanded. Second, instead of storing labels in a list, a $k$-dimensional tree is used for storage. For each node $r$ on level $\ell$, the left child is better with respect to dimension $\ell\text{ mod }k$ and the right child is worse (or equal). Finally, each node $r$ is associated with a bound $u_r$ where each dimension contains the best value among all nodes in the subtree rooted at $r$. Therefore, if a label is not dominated by $u_r$, it is not dominated by any label in the subtree rooted at $r$, saving several comparisons.

\section{Large Neighborhood Search}\label{sec:Algo}
The \textit{Large Neighborhood Search} (LNS) algorithm was introduced by Paul Shaw in 1998~\cite{Shaw1998}. The algorithm starts from an already feasible solution. The main idea is to destroy part of a solution in order to obtain a sub-problem that is easy to solve optimally or at least close to optimality. Selecting the part to destroy is done by a set $\Omega$ of \emph{destroy operators} (or \textit{destroyers}), the operator to apply is chosen randomly proportional to a given weight vector $\bm \rho$. Solving the sub-problem is done by a \emph{repair operator}, often an exact method. We accept the new solution $S'$ if $z(S')< z\left(S_\textup{bsf}\right)$, where $z$ represents the objective function value~\eqref{eq:objective} and $s_\textup{bsf}$ is the best-so-far solution. Algorithm~\ref{alg:alns} shows the pseudo-code of the algorithm.

\begin{algorithm}[t]
    \caption{Adaptive Large Neighborhood Search}
    \label{alg:alns}
    \KwIn{$\size_0$ (initial destruction size)}
    \KwOut{$S_\textup{bsf}$ (best solution found)}
    
    $\size \gets \size_0$\;
    Construct the initial solution $S_0$ using the Greedy algorithm\;
    $S_\textup{bsf} \gets S_0$\;
    Initialise the weights $\bm \rho$\;
    \While{$\text{time} < t_\textup{max}$}{
        Select destroy operator $\omega \in \Omega$ using $\bm \rho$\;
        $S' \gets r(\omega(S_\textup{bsf}, \size))$\;
        
        \If{$z(S') < z(s_\textup{bsf})$}{
            $S_\textup{bsf} \gets S'$\;
        }
        
        Update weights $\bm \rho$ and sub-problem size $k$\;
    }
    
    \Return{$S_\textup{bsf}$}\;
\end{algorithm}

\subsection{Destroy Operators}

Since our repair mechanism can only produce complete shifts, the aim of the destroyers is to select a subset of employees $E'\subseteq E$ that is removed from the current solution. The size of the sub-problem $\size=\abs{E'}$ is given to the destroy operator. We propose three distinct ways to select $E'$:

\begin{description}
	\item Employees uniform (\eu): Select $\size$ employees uniformly. 
	\item Employees weighted (\ew): $\left\lfloor\frac{\size}{2}\right\rfloor$ of the employees are selected using their cost as weight, the others uniformly. This is motivated by the fact that employees with high cost have a higher potential to benefit from reoptimization. The split is done since a combination of high-cost and low-cost shifts can have a better potential to balance the shifts in the sub-problem, e.g., by transferring some legs from the high-cost shift to an underutilised shift.
 \item Tour remover (\tr): A tour is uniformly selected and all employees that share at least one leg of this tour are removed. This process is iterated until at least $\size$ employees are removed. This operator is based on the idea of selecting employees that have something in common and therefore have a higher potential that useful recombinations of their shifts are possible, e.g., optimizing when and where a bus is handed over from one driver to the other. Note that this operator might select more than $\size$ employees because it  removes all the employees who share a tour. However,  tours are usually not shared by too many employees since this incurs extra cost, so $\abs{E'}$ does typically not exceed $\size$ by much.
\end{description}

\subsection{Repair Operators}

Once a set of removed employees $E'$ is selected, the repair mechanism needs to solve the sub-instance that is created by using all legs $\ell$ assigned to any employee $e\in E'$ together with the common data for the whole instance. 
This sub-instance represents a complete instance of BDSP and can therefore be solved with any solution method of choice.

Since the \acrfull{bp} approach presented in Section~\ref{sec:bp} is the most powerful for small instances (it can provide an optimal solution for instances with $10$ tours within seconds), it is the best fit for solving these sub-instances.

However, as the evaluation in Section~\ref{sec:results} shows, for small instances, the results are very close to the optimum when only solving \acrfull{cg} on the root node and then solving the master problem with integrality constraint on the set of columns obtained during \gls{cg}. These solutions are often much faster, but achieve a gap of around 1\% while the following branching process only closes this remaining gap very slowly.

Therefore, we propose to drop the aim of optimally solving the sub-instance with \gls{bp}, and instead only use \gls{cg} on the root node to get very good solutions to the sub-instance very fast. In the evaluation, we compare the repair operators using Column Generation (\rcg) and full Branch and Price (\rbp).

Once the repair mechanism returns a solution consisting of employees $E^*$ that contain all bus legs from $E'$, the new solution for the full problem is provided by $(E\backslash E')\cup E^*$.

\subsection{Sub-Problem Size}

An important parameter for large Neighborhood search is the size of the sub-problem. However, the appropriate size depends on the destroy and repair operators. In the case of our system, the destroy operators are easy and fast to apply, but the complexity of Branch and Price increases rapidly with the size. Even when just applying Column Generation, the size of the RCSPP in the sub-problem still leads to considerable increases in runtime.

Therefore, based on preliminary experiments, the smallest sub-problem size in use is $\size=5$. This size can still be solved in a few seconds, so it is fast enough, but it also leads to a high number of improvements, so it is large enough to allow meaningful changes of the solution. In the process of the search this size can be increased if too many iterations without improvement occur. This indicates that a larger size might be needed to escape local optima.

We use a maximum size of $\size_{\textup{max}} = 20$ since runtime grows rapidly and for larger size too much time would be spent on each individual sub-problem. When running the algorithm, the size starts with an initial value of $\size_0$, and is increased by 1 until reaching $\size_{\textup{max}} = 20$ whenever the previous improvement was more than $\maxiter$ iterations ago. As soon as an improvement is found, $\size$ is set back to the initial value $\size_0$.

\subsection{Adaptivity}
\textit{Adaptive Large Neighborhood Search} (ALNS) is an extension of LNS, where the weights $\bm \rho$  for selecting the operators are adapted dynamically based on their performance \cite{Ropke2006}.

Our method takes into account the score and the time required by  destroy operator $i$. 
At first, every component of the weight vector $\bm \rho$ is set to $\frac{1}{\abs{\Omega}}$.
The destroy operator is selected in a random way with weights $\bm \rho$ using the \textit{roulette wheel principle}:
\begin{equation}
    \mathbb{P}\left(\text{$i$-th operator is selected}\right) = \frac{\rho_i}{\sum_{j=1}^{\lvert{\Omega}\rvert}\rho_j}
\end{equation}
    The selected destroy operator is then applied to the current solution $S$, which results in a sub-problem that is passed to the repair operator $r$. We update the weights considering the number of successes and the total time of its selections. A similar approach was used in a related crew scheduling domain~\cite{carmo2019adaptive}.
At iteration $n$, we update the weight $\rho^n_i$ of the $i$-th destroy operator  using the following equation:
\begin{equation}\label{eq:weights_update}
	\rho^{n+1}_i=  \lambda\rho^n_i + (1-\lambda)\,\frac{\sum_{j=0}^{n}\sigma^j_i}{\sum_{j=0}^n\tau^j_i}
\end{equation}
where
$\sigma^j_i = 1$ if the $i$-th operator has improved the best-known-solution at iteration $j$, else $0$. The denominator is a sum of runtimes, so $\tau^j_i$ represents the time the $i$-th operator took for the entire process (destroying + repairing) at iteration $j$. If operator $i$ was not selected at iteration $j$, then $\sigma^j_i=\tau^j_i=0$. The real parameter $\lambda\in [0,1]$ controls the sensitivity of the weights. A value of $\lambda$ close to $0$ implies that the operator performance during the search has a large influence while a value of $1$ keeps the initial weights static.

As long as the denominator is $0$, the value of the fraction is set to $0$. In this case, $\rho^{n+1}_i= \lambda\rho^n_i$.

\section{Integration of Column Generation and Large Neighborhood Search}
\label{sec:integration}

When applying \acrfull{lns} on an optimization problem, per default the repair operators are used in a black-box fashion: The current sub-problem is fed into the operator, the corresponding solution is used to update the solution to the overall problem, and it does not matter how it was obtained. Each sub-problem is solved from scratch, and all additional information gained while solving the sub-problem is discarded every time.

However, this might actually be inefficient, as information from solving each sub-problem might be useful for future sub-problems, or it might be used beyond individual sub-problems to globally enhance the best solution. In this section, we present two novel integration techniques for combining \gls{lns} with a \acrfull{cg} repair operator that a generally applicable for this combination of solution methods.

\subsection{Column Storage}\label{sec:column_storage}

The first integration is dedicated to the reuse of information between sub-problems. Recall that each shift (column) $s$ generated by \gls{cg} contains a subset of the bus legs $L_s\subseteq L$. Each time a subproblem $i$ consisting of legs $L_i\subseteq L$ is solved, a large set of columns $S_i$ is generated, and a solution to the sub-problem $S_i^*\subseteq S_i$ is returned.

By default, the columns are regenerated for each sub-problem, however, the same columns might be generated repeatedly by multiple sub-problems. Therefore, for a potential improvement, a column storage $\hat{S}$ is introduced, and after solving a sub-problem, the update in Equation~\eqref{eq:storage_update} is performed.
\begin{equation}\label{eq:storage_update}
    \hat{S}\gets \hat{S}\cup\texttt{select}(S_i)
\end{equation}

\noindent
A subset of the newly generated columns is added to the column storage $\hat{S}$, where the selection criteria can be chosen freely, including no storage, or storing all columns.

Now, each time a sub-problem $i$ is solved, the set of columns $S_i$ can be initialized as seen in Equation~\eqref{eq:col_init}.

\begin{equation}\label{eq:col_init}
    S_i\gets\{s\in\hat{S}~|~\forall\ell\in L_s:\ \ell\in L_i\}
\end{equation}

\noindent
Each column from the storage that only contains legs that are part of the sub-problem are added to the initial set of columns, therefore, these columns do not need to be rediscovered again in the current sub-problem. Using this column reuse strategy is denoted by adding $\textsc{+r}$ to the \gls{lns} version.

\subsection{Global Background Solver}\label{sec:background}

The second improvement adds the global view of \gls{bp} into the local view based on sub-problems in \gls{lns}. The whole set of columns $\hat{S}$ can be used in a global master problem as described in Section~\ref{sec:master}. Since in \gls{lns} we do not aim to solve the problem exactly, there is no need to solve the relaxed master problem, instead the integer problem can be solved to get the best possible solution for the current set of stored columns.

However, solving increasingly larger integer master problems takes time and memory. Therefore, we propose to use a second thread for this improvement, using the first thread entirely for \gls{lns}, while the background thread repeatedly solves the master problem for $\hat{S}$. At the end of every repairing phase of \gls{lns}, the algorithms checks whether the solution from the second thread is better than the current best: 
\[s_\text{bsf}\gets \argmin\left(z(s'), z(s_\text{bsf}), z(s_\text{bg})\right)\] where $s_\text{bg}$ is the solution from the second thread, $s_\text{bsf}$ is the best-so-far and $s'$ is the solution after the repairing phase, as described in Algorithm~\ref{alg:alns} . If $s_\text{bg}$ is better, it replaces $s_\text{bsf}$, and \gls{lns} continues from this improved solution. This is only a mild form of parallelization that is easily applicable with current multi-core machines, but can be very beneficial to further improve the joint performance of the methods.

As the background solver repeatedly solves similar problems, and new columns are frequently added, two more considerations are relevant. First, columns for this solver are never removed, but only added, allowing a warm start from the previous result in each solving cycle. Second, the master problem might not be solved to optimality, but stopped according to a given criterion, since incorporating new columns might be more beneficial than spending more time on the current cycle. We propose to use a timeout $t_{bg}$, but at least solve the root node of the MIP, before ending the current cycle. Adding the background solver is denoted as $\textsc{+b}$ for the LNS version.

This improvement using the background solver can also be used for \acrlong{bp} on its own. By default, the integer master problem is solved there whenever \acrlong{cg} on a node is finished (line 26 in Algorithm~\ref{alg:bp}), or in case even the first node runs into timeout (line 19). However, as each integer solution is a global upper bound independent from the current position in the branching tree, this process can be done in the background solver as well on the set of columns $S$. We call this version \bpbg in the evaluation.

\subsection{Selection of Columns to Store}
\label{sec:col_select}

A critical parameter for the proposed improvements is the selection of columns to store via the \texttt{select} function. The easy way is to store all new columns, we refer to this option as \textsc{(f)} (full set). However, this might use a considerable amount of memory. Further, the time to search for useful columns for the current sub-problem might counter the benefit of not having to rediscover them, and the background master problem might get excessively large.

A very lightweight alternative would be to only select the best subset $S_i^*$ for each sub-problem. We denote this version as \textsc{(b)} (best). The focus on only the best columns will keep the size of $\hat{S}$ low, but ideally still preserve very good solutions, while the selection of different sub-problems over time should still provide some diversity.

\afterpage{\clearpage}
\section{Evaluation}
\label{sec:results}

All executions were performed on a cluster with $11$ nodes using Ubuntu 22.04.2 LTS (GNU/Linux 6.8.0-48-generic x86\_64) set up for maximum reproducibility \cite{fichte2024parallel}.
Each node has two  Intel
Xeon E5-2650 v4 processors (max \qty{2.20}{\giga\hertz}, 12 physical cores, no hyperthreading). Unless otherwise specified, we use a single thread and up to  \qty{25.6}{\giga\byte} of RAM. Unless stated otherwise, 
all experiments are performed with a fixed budget of $\qty{1}{\hour}$ of CPU-time. Deterministic algorithms are executed once, non-deterministic ones are executed $10$ times for each instance. Random seeds for different runs are recorded for reproducibility.

The implementation is written in Python, executed with PyPy 7.3.17 for speed using Python 3.10.14. \acrlong{bp} was implemented in Java, using OpenJDK 23.0.1, and Gurobi 12.00 for the master problem.  The figures were generated using Matplotlib~\cite{Hunter:2007} v3.10.0 and Seaborn~\cite{Waskom2021} v0.13.2.

\paragraph{Software Changes.}

To get a fair comparison, we reran the experiments from the previous publications \cite{kletzander2021branch,mazzoli2024investigating}. We updated previously used software versions, but the largest change was switching from CPLEX to Gurobi. This was due to the fact that at least the Java interface of CPLEX is very memory-hungry, leading to frequent out-of-memory errors for larger instances, and frequent crashes of the CPLEX-Java interface when Java was in the process of garbage collection (freeing unused memory). In contrast, Gurobi works on all instances, even very large ones, without running out of memory.

\paragraph{Evaluation Metric.}
To have a metric quality that does not scale with the dimension of an instance, we evaluate the quality of a solution using the relative gap ($\GAP$) compared to the best-known solution:
\begin{equation}
\GAP(x) = \frac{z(x)-z\bigl(x_\textup{bks}\bigr)}{z\bigl(x_\textup{bks}\bigr)}\cdot 100,
\end{equation}
where $x$ is the solution and $x_\textup{bks}$ is the best-known solution among all methods evaluated in this article.

\paragraph{Instances and Initial Solutions.}
We use the publicly available sets of benchmark instances provided by previous work \cite{kletzander2020solving,Kletzander2022}\footnote{\url{https://cdlab-artis.dbai.tuwien.ac.at/papers/sa-bds/}}. 
There are $65$ instances in $13$ sizes, ranging from around $10$ to around $250$ tours. Note that the last 3 sizes have much larger size changes than the previous sizes. 

The instances in this paper use $\mathit{startWork}_p=15$ and $\mathit{endWork}_p=10$ at the depot ($p=0$).
These values are $0$ for other positions. For the given instances, the number of legs is proportional to the number of bus tours with approximately $n_\textup{legs} \approx 10\cdot n_\textup{tours}$. 

The initial solutions required by \gls{lns} have been generated using a greedy construction method~\cite{kletzander2020solving}, assigning bus legs to the employee where the lowest additional cost is incurred, or to a new employee if this would incur an extra cost of at most $500$ compared to the best existing employee assignment.

\gls{bp} starts from a very simple solution where each bus leg forms its own shift as starting from the greedy solution did not improve the results.

\subsection{Branch and Price}

This subsection provides the evaluation of important \gls{bp} components and their effect, as well as the comparison of using a single thread (\bpplain), parallelization of the MIP (\bpmulti), and mild parallelization using the background thread (\bpbg).

\subsubsection{Effects of Subproblem Improvements}

While all the design choices and parameter settings have been carefully tested, this section focuses on the effects of our crucial subproblem improvements. Note that these experiments were done on a faster machine with an i7-6700K with 4x4.0 GHz and with CPLEX as the MIP solver.

We highlight the benefits for splitting the subproblem on the example of instance $40\_16$. At the root node the subproblem is solved 197 times. Only 76 times the uncentered 30-min break graph is solved and only 14 times the centered 30-min break graph. However, the runtimes for the different graphs are 4 ms, 1.5 s and 1.9 s respectively at the beginning (arc throttling) and 16 ms, 7.7 s and 51.1 s at the end (all arcs). Even though easier subproblems only provide parts of the new shifts, their runtime advantage makes it worth only going to the more complex subproblems when necessary. 

\begin{figure}[!th]
   \centering
\includegraphics[width=\scalingFactor\textwidth]{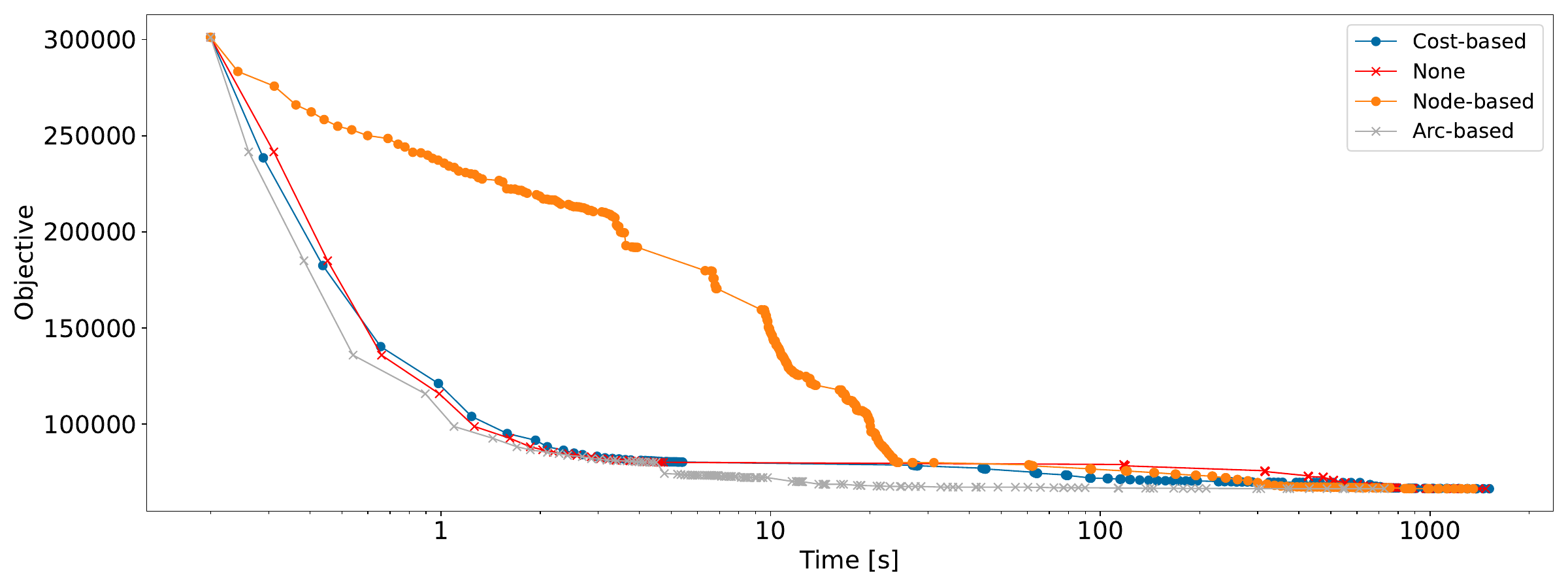} 
\caption{Throttling approaches for $40\_16$ }
\label{fig:throttle}
\end{figure}

Figure~\ref{fig:throttle} shows the comparison of cost-based throttling (100 best regarding cost per node, factor 10 increase), no throttling, node-based throttling (100 first per node, factor 10 increase), and arc-based throttling (arc cost limited to 100, factor 2 increase) on the objective of the reduced master problem over time (logarithmic time axis to highlight the differences early in the search). The results show a significant overhead of cost-based throttling (taking the longest time overall), and weak improvements early but more efficiency later for node-based throttling. Arc-based throttling, however, clearly shows superior performance starting to drop significantly in objective value compared to other approaches in less than 10 seconds. Its overall time is 727 seconds compared to 1354 seconds for node-based throttling, the second best approach.

Regarding the dominance algorithm, one run of the largest subproblem on instance $40\_16$ takes 290 seconds with the default dominance algorithm, 293 seconds with the multidimensional divide-and-conquer algorithm, and 51 seconds with the two-stage algorithm. Overall, for default dominance the root node terminates in only 25 compared to 33 instances for the two-stage algorithm within the time limit.

\subsubsection{Single- and Multi-Threaded Performance}

By default \bpplain uses the following schedule in a single thread: It allows up to one hour of \gls{bp}, but if \acrlong{cg} at the root node is not finished by then, it allows up to one more hour to solve the integer master problem to provide at least a best effort solution, which is very good in many cases. Still, this approach requires two separate timeouts, and it is not clear in the beginning if the full time will be needed on any of them. Recall that an integer solution is calculated at each node in the branching tree, therefore, the second phase is not needed if the root node is completed within the first hour.

We compare \bpplain to the following multi-threaded versions in this section:
\begin{description}
    \item \bpmulti\quad Multi-threaded MIP and LP solving: Since a considerable amount of time is spent solving the relaxed and integer master problem, we use $8$ threads for Gurobi to speed it up.
    \item \bpbg\quad MIP solving in the background: The main thread deals with the RMP and solving the sub-problem, while solving the integer problems is shifted to a background thread as described in Section~\ref{sec:background}. This has the advantage that no two separate phases are needed if the root node is not completed.
\end{description}

\begin{table}[!th]
\small
\centering
\caption{\gls{bp} results grouped by size}
\label{tab:results_bp_sizes}
\begin{tabular}{l*{3}{S[table-format=6.1]cS[table-format=4.1]}}
\toprule
 &\multicolumn{3}{c}{\bpplain}&\multicolumn{3}{c}{\bpmulti}&\multicolumn{3}{c}{\bpbg} \\
\cmidrule(lr){2-4} \cmidrule(lr){5-7} \cmidrule(lr){8-10}
Size &{Best}&{Opt. Gap}&{Time [s]}&{Best}&{Opt. Gap}&{Time [s]}&{Best}&{Opt. Gap}&{Time [s]}\\
\midrule
10 & \bfseries 14709.2 & 0.0 & 8.9 & \bfseries 14709.2  & 0.0 & 8.4 & \bfseries 14709.2  & 0.0 & 7.9 \\
20 &  30299.4 & 0.0 & 1430.1 & 30299.4 & 0.0 & 1320.7 & \bfseries 30294.6 & 0.0 & 1410.4 \\
30 & \bfseries 49846.4 & 0.4 & 3605.3 & 49848.0 & 0.4 & 3604.5 & \bfseries 49846.4 & 0.4 & 3605.8 \\
40 & 67017.0 & 0.4 & 3612.4 &\bfseries  67009.2 & 0.3 & 3600.2 & 67016.4 & 0.3 & 3604.1 \\
50 & 84338.8 & 0.4 & 3627.7 & 84344.8 & 0.4 & 3603.1 & \bfseries 84332.4 & 0.4 & 3626.8 \\
60 & \bfseries 99754.6 & 0.4 & 4058.5 &\bfseries  99754.6 & 0.4 & 3633.6 & 99720.4 & 0.4 & 3602.3 \\
70 & 118337.6 & - & 5959.4 & \bfseries  118285.6 & - & 5147.8 & 118472.8 & - & 3601.5 \\
80 & 134925.8 & - & 6244.4 &\bfseries  134661.0 & - & 5243.1 & 134978.0 & - & 3601.5 \\
90 & 150292.8 & - & 7200.9 & \bfseries 150172.0 & - & 6833.1 & 150804.6 & - & 3601.7 \\
100 & 168554.2 & - & 6624.6 &\bfseries  168024.4 & - & 6301.4 & 170070.0 & - & 3601.4 \\
150 & 273629.2 & - & 7200.4 & \bfseries 266465.2 & - & 7200.5 & 280936.4 & - & 3601.6 \\
200 & 380518.0 & - & 7202.1 & \bfseries 371942.0 & - & 7200.6 & 390941.4 & - & 3603.7 \\
250 & 520063.4 & - & 7217.8 & \bfseries 498122.4 & - & 7208.5 & 534937.2 & - & 3603.1 \\
\bottomrule
\end{tabular}
\end{table}

\begin{figure}[!th]
    \centering
    \includegraphics[width=\scalingFactor\textwidth]{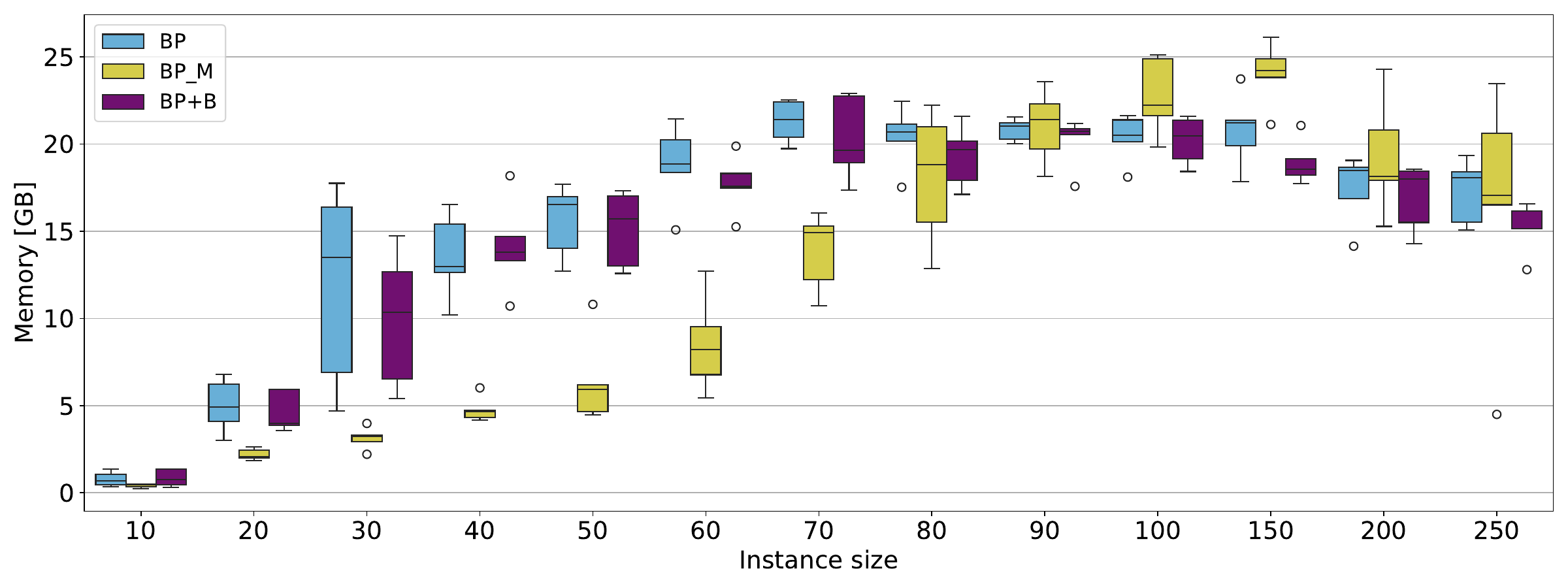}
    \caption{Memory usage of different B\&P variants}
    \label{fig:ram_bp}
\end{figure}

We compare the three variants of \gls{bp} and display the results in Table \ref{tab:results_bp_sizes}. Each entry shows the average of the results for 5 instances of the same size. Column \emph{OG} shows the optimality gap in percent. There was only one run per method since \gls{bp} is deterministic (except for small differences due to thread timing with \bpbg). Figure~\ref{fig:ram_bp} shows the maximum memory use. We note that in most cases, especially regarding the smaller instances where \gls{bp} is most suited, there is no significant difference in the results, however, there are several relevant observations. 

Starting from size 70, \bpmulti consistently provides the best results among the options, with especially large gaps starting at size 150. While there is no indication that using the extra threads helps with solving the RMP, this shows that solving the integer master problem benefits from the additional computational resources, especially for larger problems. This also shows in the runtimes, where the time for the second phase is often much shorter. While these are significant changes, they come at the cost of excessive additional resources (1 vs. 8) threads, and show most benefit on instances where \gls{bp} is outperformed by \gls{lns} anyway, making it not worth the extra resources. Of interest regarding memory use is that this variant, compared to the others, uses way less memory for instances of size 30 to 70, but overtakes the other variants for larger instances.

The mild parallelization in \bpbg again shows very similar results for the smaller instances, while the results for very large instances are slightly worse. Since the integer master problem in the background does not immediately include the most recent new columns, the final result is computed from a slightly inferior set of columns compared to the other methods. However, the great advantage is that the extra runtime of the second phase is not added on top, but in parallel. Therefore, for larger instances, the runtime is half of \bpplain, while the computational effort is the same. Again, however, since the focus of \gls{bp} is on smaller instances, this effect is not so relevant in practice, and we use \bpplain for the comparisons with other methods.

A big advantage of \gls{bp} is that for smaller instances (where the root node is completed), a bound on the solution quality can be provided. Results for all instances of size 10, and 4 out of 5 instances of size 20 are proven optimal, and very low bounds of up to $0.4\,\%$ are provided for instances of up to size 60 (for all instances up to size 50 and 4 out of 5 of size 60). While Lagrangean bounds can also be computed for larger instances, they are too weak and not reported here.

\subsection{Large Neighborhood Search}

To select the LNS parameters, we thoroughly analyzed the impact of different algorithmic components on a subset of instances from the benchmark set. Since all algorithms show similar performance on instances of the same size, we chose one instance from each size, skipping the smallest size that can be solved to optimality with BP in seconds. Therefore, we used $12$ instances in this part of the evaluation; each result is the average of 5 runs.

For the experiments in this section, we used an earlier setup with Java (OpenJDK 14.0.1) and CPLEX 12.10 to solve the master problem. The choice of solver does not impact the experimental results, as the behavior of the MIP solver only differs for larger problems, while the sub-problems in \gls{lns} are fairly small.

We investigate the following components:
\begin{enumerate}
    \item The repair mechanism: \rbp or \rcg
    \item The initial destruction size $\size_0$
    \item Max. number of iterations without improvement $\maxiter$
    \item The destroyer selection
    \item The role of adaptivity
\end{enumerate}

\subsubsection{Repair Operator Selection}

\begin{figure}[p]
    \centering
    \begin{subcaptionblock}{\textwidth}
        \includegraphics[width=\scalingFactor\textwidth]{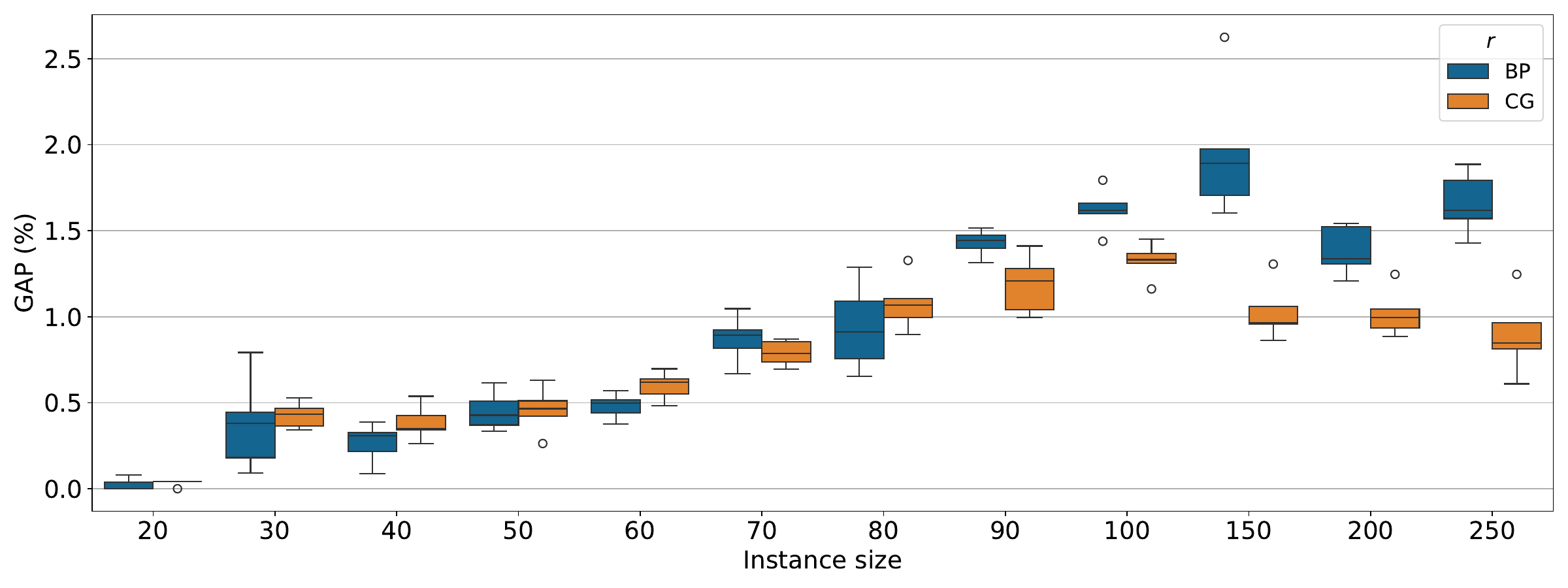}
        \subcaption{GAP for different repair operators}
        \label{fig:BPvsCG}
    \end{subcaptionblock}\\
    \begin{subcaptionblock}{\textwidth}
        \includegraphics[width=\scalingFactor\textwidth]{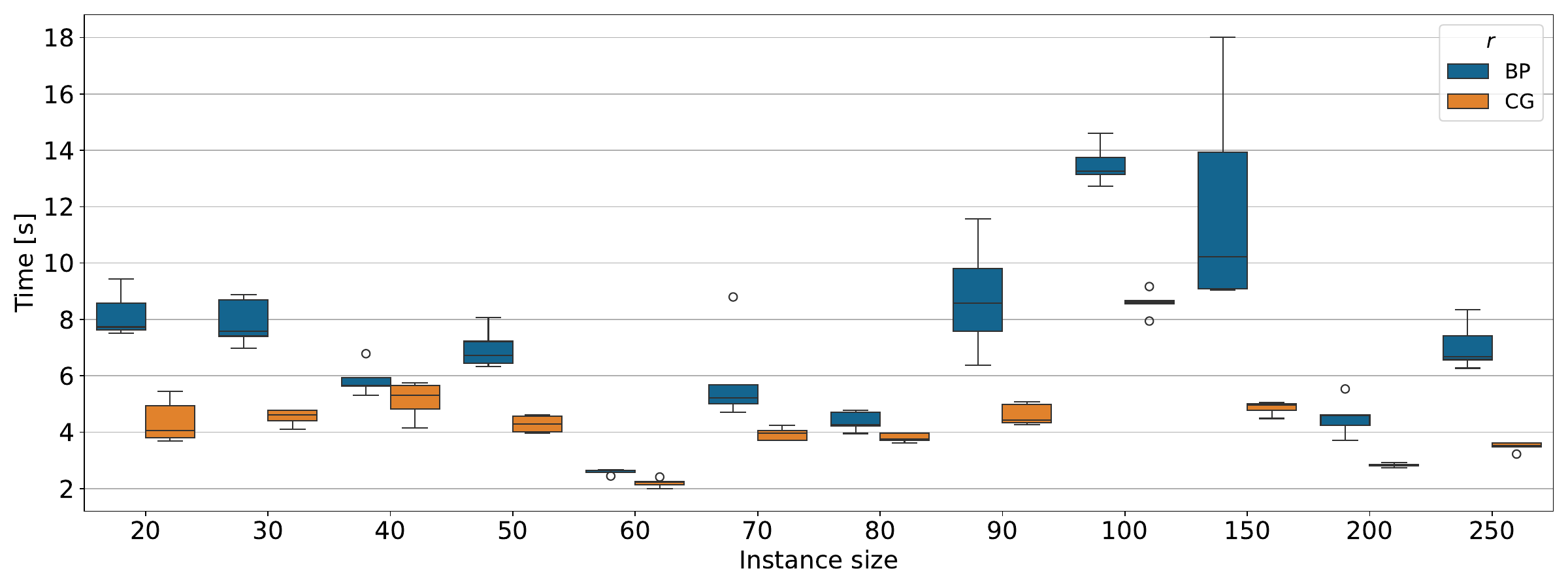}
        \subcaption{Average CPU-time of different repair operators}
        \label{fig:BPvsCGtime_avg}
    \end{subcaptionblock}\\
    \begin{subcaptionblock}{\textwidth}
        \includegraphics[width=\scalingFactor\textwidth]{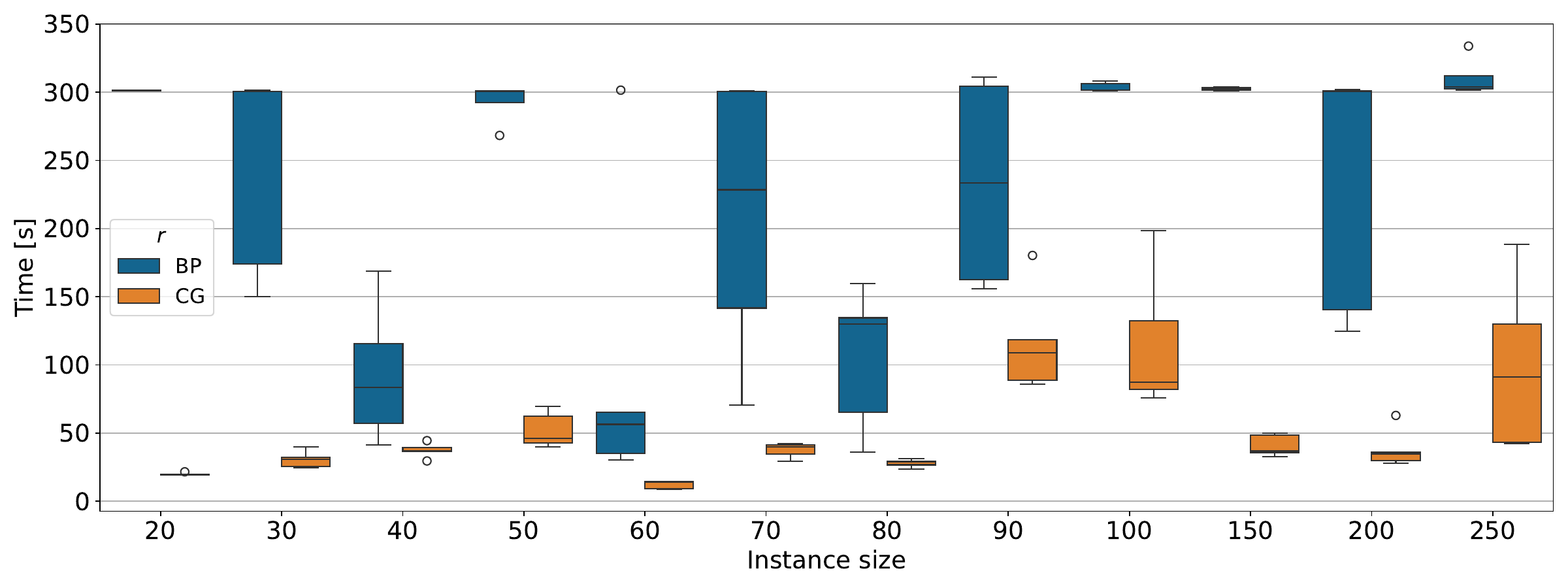}
        \subcaption{Maximum CPU-time of different repair operators}
        \label{fig:BPvsCGtime}
    \end{subcaptionblock}
    \caption{Comparison of repair operators}
    \label{fig:merged_BPvsCG}
\end{figure}

For fixed $k_0=10$, $\maxiter=50$, and equal selection of all destroyers, we compared \rbp and \rcg. Initial experiments showed that the performance of destroy and repair is rather independent from each other, which enables separate evaluation. Each repair operation has a maximum budget of $\qty{5}{\minute}$, but is expected to usually terminate much faster.

Results are shown in Figure~\ref{fig:BPvsCG}. The performance is very similar for the smaller instances, but \rcg is clearly the better choice for larger instances, showing that the extra time used for repairing in \rbp is not justified. Figures~\ref{fig:BPvsCGtime_avg} and~\ref{fig:BPvsCGtime} show that \rbp usually takes longer than \rcg. While the average time in both cases is under $\qty{10}{\second}$ for most instances, \rcg shows significantly lower average values. \rbp often reaches the time budget of $\qty{5}{\minute}$, while \rcg is mostly below $\qty{2}{\minute}$, showing a much better worst case behavior. 

\subsubsection{Initial Destruction Size}

\begin{figure}[!th]
    \centering
    \includegraphics[width=\scalingFactor\textwidth]{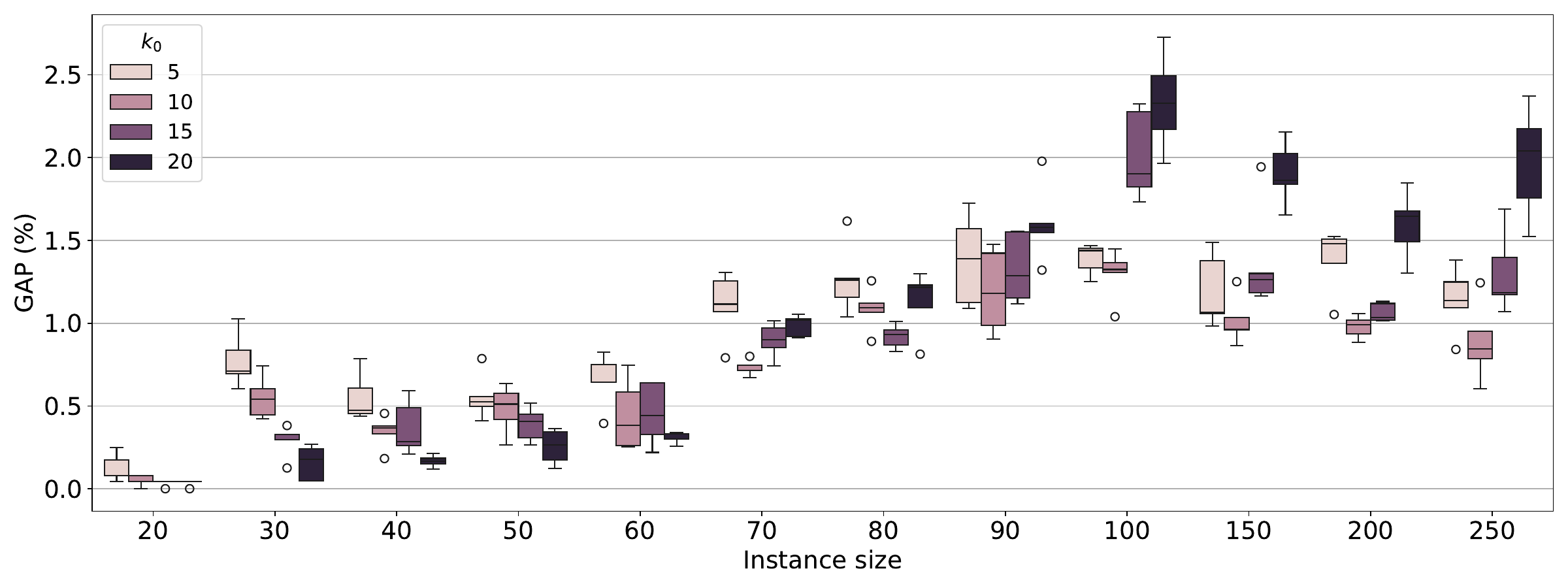}
    \caption{GAP for different values of $\size_0$}
    \label{fig:fixed_size}
\end{figure}

There are several options regarding the destroyers, first we investigated the initial destruction size $\size_0$, fixing all other parameters. The size remained constant, \rcg and all destroyers are used, and $\rho_i=1/3$ without adaptivity.

We tested sizes $\size_0\in\{5,10,15, 20\}$. 
Figure~\ref{fig:fixed_size} shows the results. While $\size_0=20$ performs slightly better for smaller instances, it is outperformed on larger instances. Overall, $\size_0=10$ seems best for the large instances which are the main focus of \gls{lns}, therefore, we fix $\size_0=10$.

\subsubsection{Number of Iterations Without Improvement}

\begin{figure}[!th]
    \centering
    \begin{subcaptionblock}{\textwidth}
        \centering
        \includegraphics[width=\scalingFactor\textwidth]{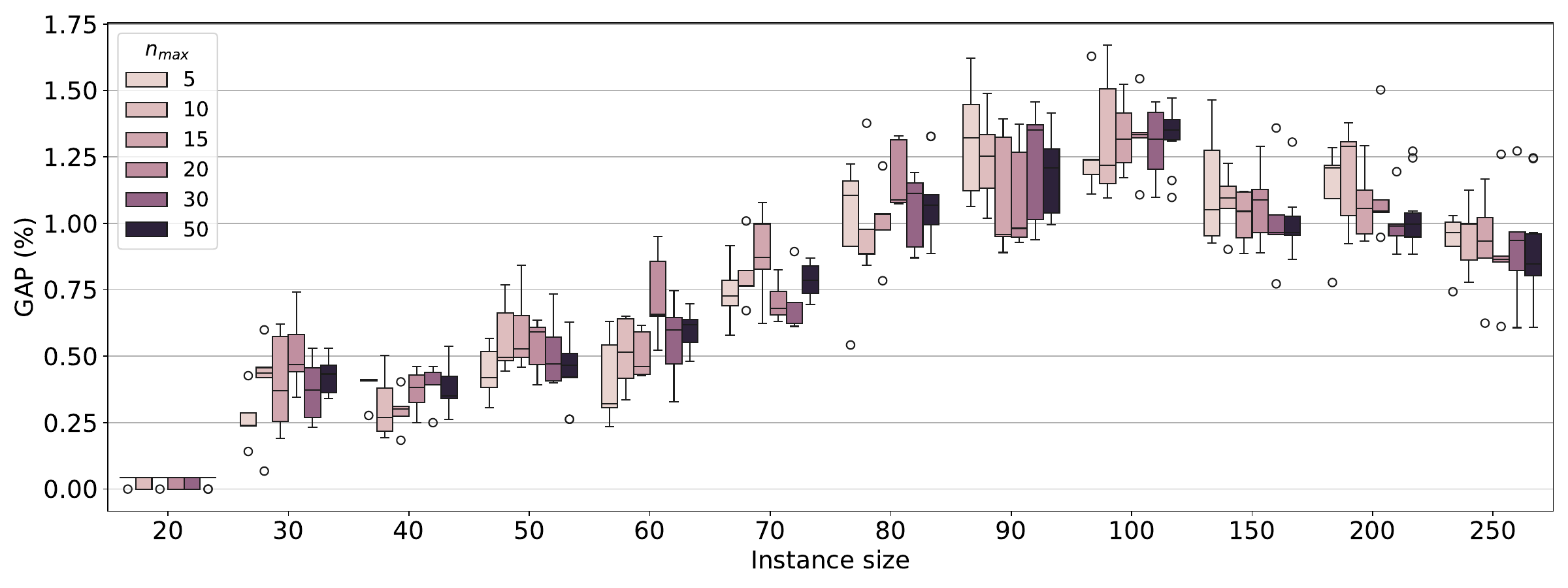}
        \subcaption{GAP for different values of $\maxiter$}
        \label{fig:dynamic_size}
    \end{subcaptionblock}\\
    \begin{subcaptionblock}{\textwidth}
        \centering
        \includegraphics[width=\scalingFactor\textwidth]{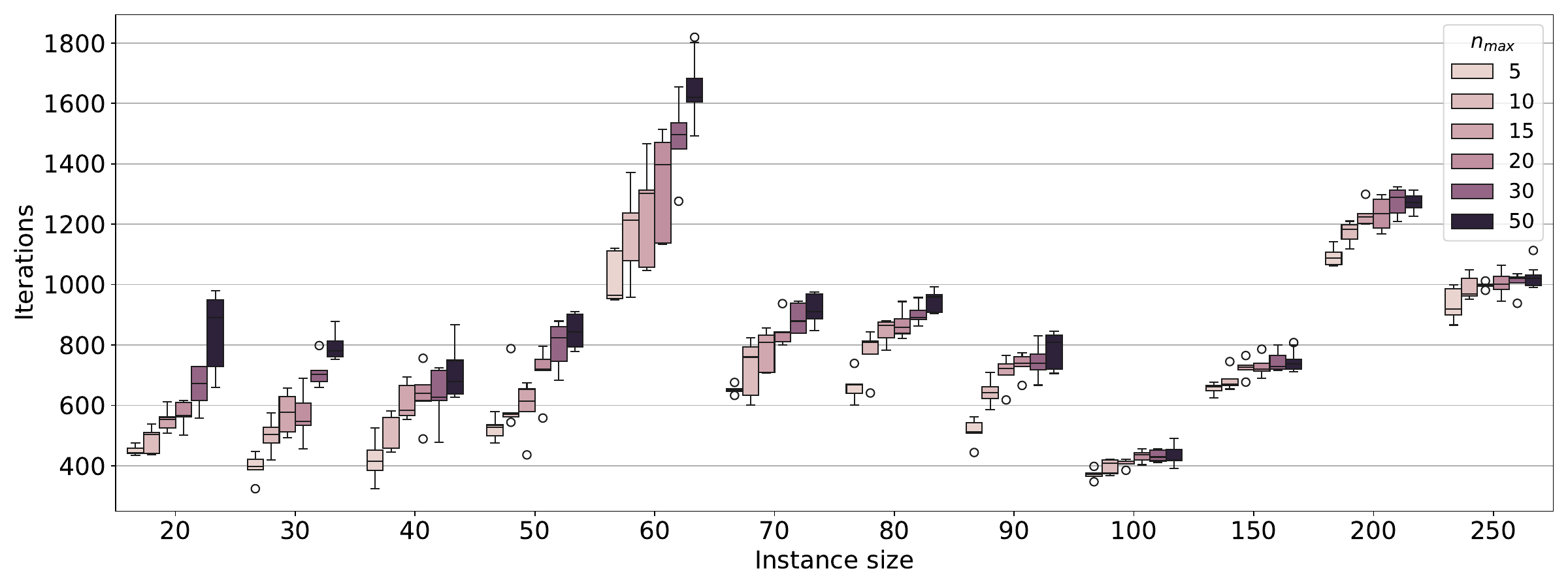}
        \subcaption{Number of iterations for different values of $\maxiter$}
        \label{fig:starting_size}
    \end{subcaptionblock}
    \caption{Comparison for different values of $\maxiter$}
    \label{fig:merged_dynamic_iterations}
\end{figure}

Next, we investigated increasing the size $\size$ every $\maxiter$ iterations without improvement by $1$, until reaching the upper bound $\size_{\textup{max}}=20$ or finding an improvement.

We tested $\maxiter\in\{5, 10, 15, 20, 30, 50\}$, 
but found no significant difference among them, as shown in Figure~\ref{fig:dynamic_size}. We decided to set $\maxiter=50$, since it still allows to increase the size when needed, but does not increase it very often. We tried starting with different values for $\size_0$, but found similar results, the initial size is more important than the step. 




Figure~\ref{fig:starting_size} shows the impact of $\maxiter$ on the number of iterations (\rcg calls). Larger values of $\maxiter$ imply less frequent size changes, so more iterations. Of note is that for the larger instances, the size barely changes, as improvements are frequently found even with the initial size until timeout.

\subsubsection{Destroyer Selection}

\begin{figure}[!th]
    \centering
    \includegraphics[width=\scalingFactor\textwidth]{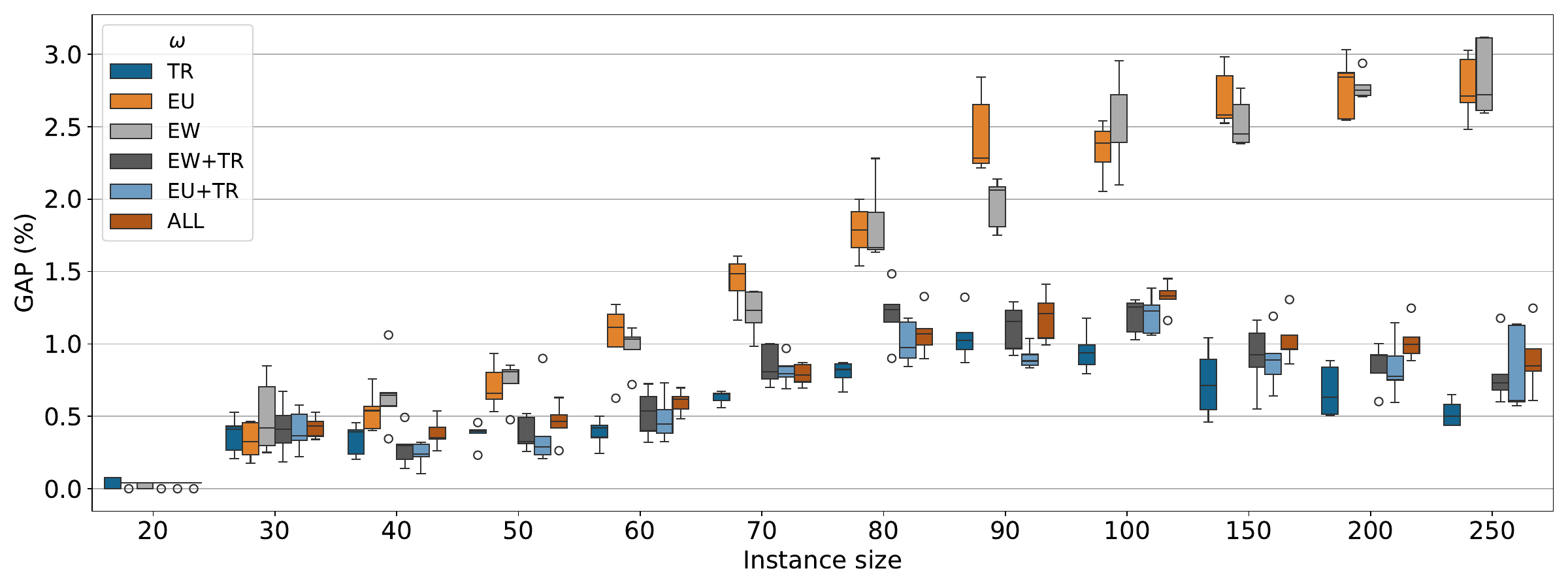}
    \caption{GAP for different subsets of destroyers}
    \label{fig:destroyers-GAP}
\end{figure}

\begin{figure}[!ht]
    \centering
    \includegraphics[width=\scalingFactor\textwidth]{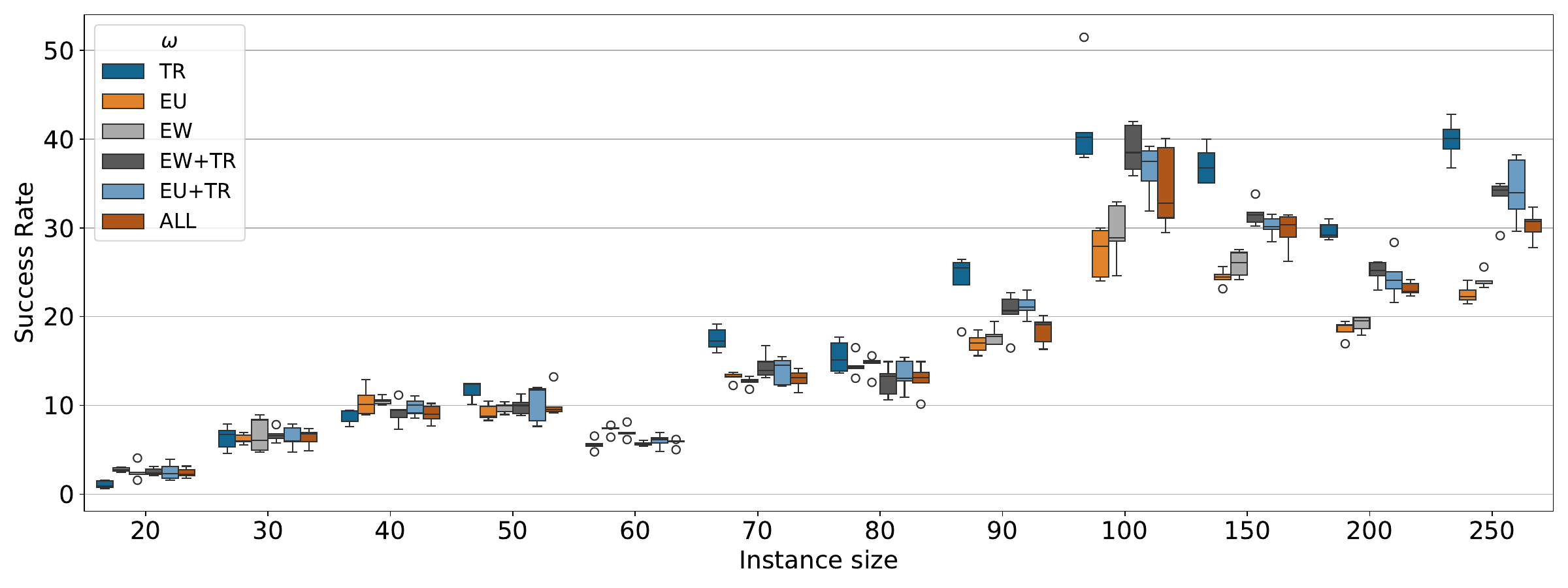}
    \caption{Success rate for different subsets of destroyers}
    \label{fig:destroyers_success}
\end{figure}

In order to understand the impact of the destroy operators, we tested all the $7$ possible combinations of them. Figure~\ref{fig:destroyers-GAP} shows that \tr has the biggest impact on the performance. Is shows the best results on its own, with very similar results using it in any other combination, while all combinations without \tr show significantly worse performance, with higher divergence among larger instances.

The advantage of selecting employees that share tours is that the sub-problems are more likely to allow meaningful optimizations. This hypothesis is further backed by the success rate (percentage of iterations where the current solution could be improved) in Figure~\ref{fig:destroyers_success}, which shows that in general for larger instances more improvements in sub-problems can be found, but especially using just \tr has a higher success rate than any other set of destroyers.

In summary, the best choice for the parameters is \tr as operator, an initial destruction size of $\size_0=10$, and $\maxiter=50$ iterations without improvements.

While only using \tr is the best choice, we further investigated several non-uniform weight distributions with high weights for \tr. Denoting the weights as $(\rho_{\omega_\textsc{eu}}, \rho_{\omega_\textsc{ew}}, \rho_{\omega_\textsc{tr}})$, we conducted experiments with $(5, 5, 90)$, $(10, 10, 80)$, and $(25, 25, 50)$. The first two where very similar to \tr only, while $(25,25,50)$ started to get slightly worse.







\subsubsection{Adaptivity}

\begin{figure}[!th]
    \centering
    \includegraphics[width=\scalingFactor\textwidth]{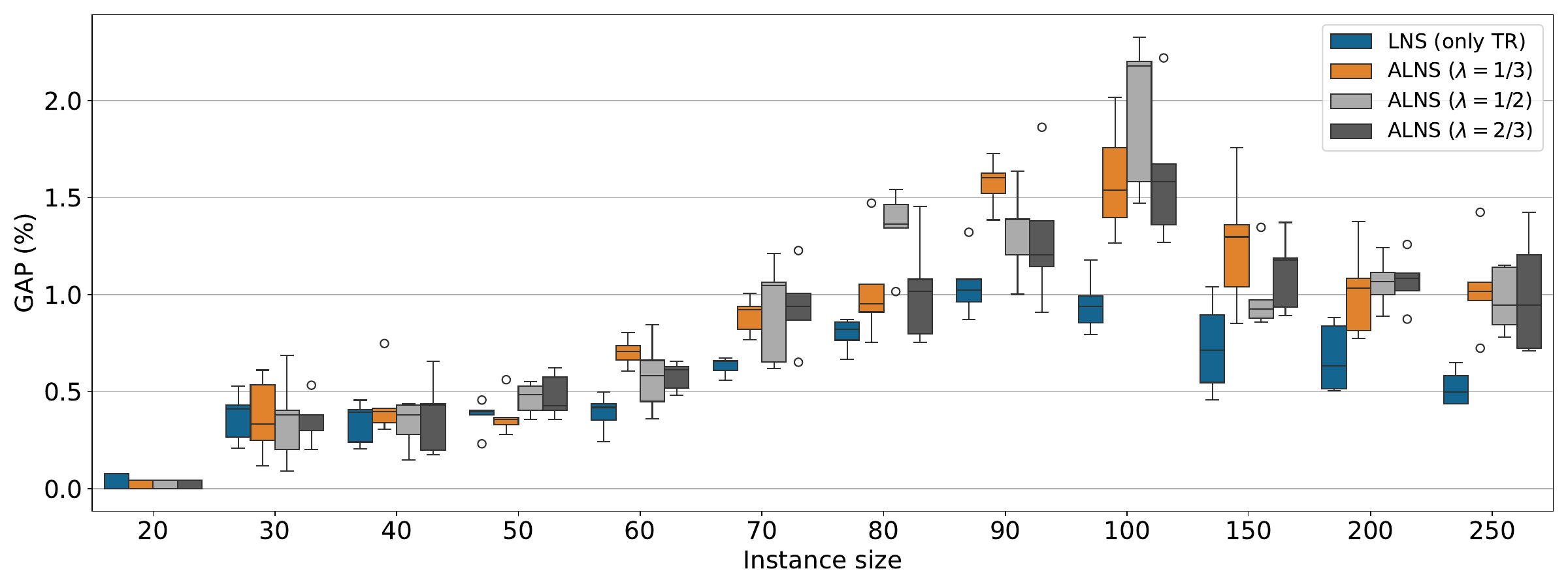}
    \caption{GAP for adaptive and static LNS}
    \label{fig:ALNSvsLNS}
\end{figure}

To investigate the impact of adaptivity, we conducted experiments by changing the parameter $\lambda$ in~\eqref{eq:weights_update}, considering all three destroy operators.
We tested three different values for $\lambda$: $\frac{1}{3}, \frac{1}{2},$ and $\frac{2}{3}$.

Figure~\ref{fig:ALNSvsLNS} suggests that the adaptivity does not improve the average GAP with respect to the solely \tr, and different values of $\lambda$ do not show significant difference.

\subsection{Integration of CG and LNS}

In this section, we evaluate and compare seven distinct variants of the Large Neighborhood Search algorithm with different levels of integration with \acrlong{cg}, summarized in Table~\ref{tab:lns_variants}. 

\begin{table}[htp]
    \caption{LNS variants}
    \label{tab:lns_variants}
    \centering
    \begin{tabular}{l*{3}{c}}
    \toprule
    Name & Column Reuse & Background Solver & \texttt{select} \\
    \midrule
     \lns & no & no & - \\
     \textsc{lns+r(b)} & yes & no & best \\
     \textsc{lns+r(f)} & yes & no & full \\
     \textsc{lns+b(b)} & no & yes & best \\
     \textsc{lns+b(f)} & no & yes & full \\
     \textsc{lns+rb(b)} & yes & yes & best \\
     \textsc{lns+rb(f)} & yes & yes & full \\
    \bottomrule
    \end{tabular}
\end{table}

The \textsc{+r} variants use the stored columns to initialize each sub-problem according to Equation~\eqref{eq:col_init}. The \textsc{+b} options apply a background MIP solver according to Section~\ref{sec:background} on the set of stored columns $\hat{S}$ with a timeout $t_{bg}=\qty{1}{\min}$ for every iteration. For each combination, two different choices for the selection of columns to store (function \texttt{select}) are evaluated according to Section~\ref{sec:col_select}. The first option \textsc{(b)} is to store only the best columns $S_i^*$ for each sub-problem, the other option \textsc{(f)} to store the full set $S_i$.

\subsubsection{Impact on LNS Metrics}

\begin{figure}[p]
    \centering
    \begin{subcaptionblock}{\textwidth}
        \centering
        \includegraphics[width=\scalingFactor\textwidth]{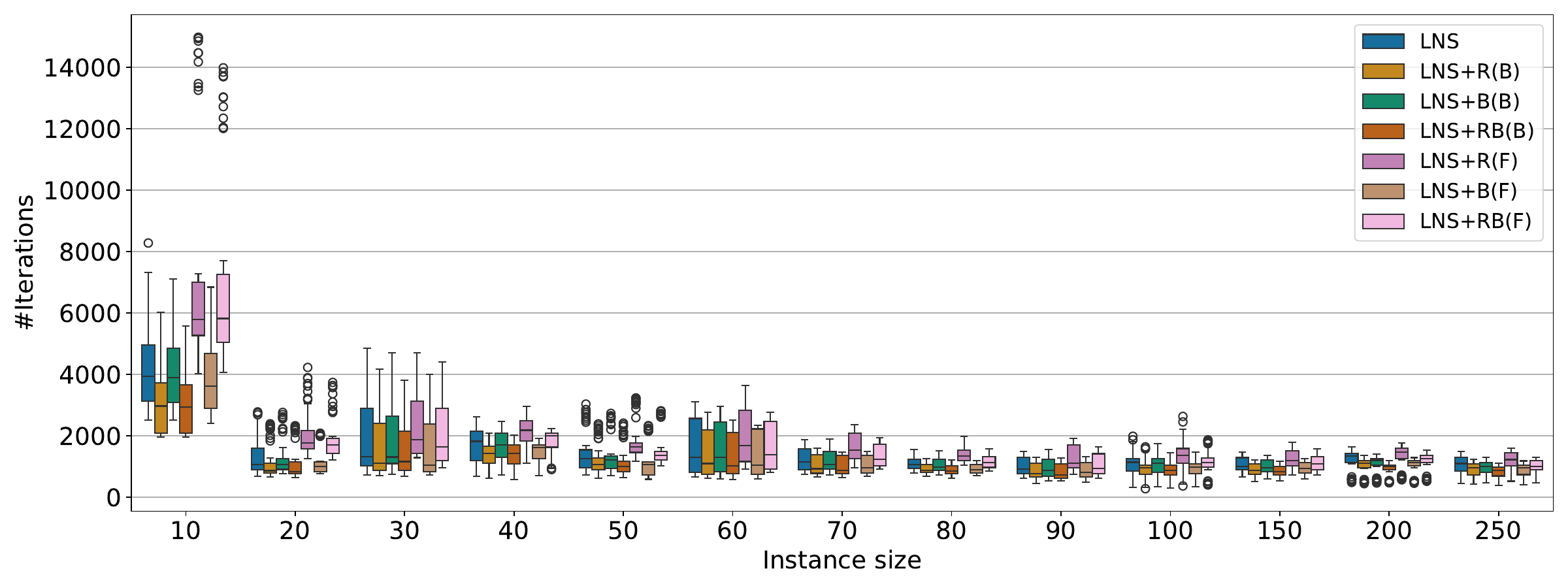}
            \caption{Number of iterations}\label{fig:boxplot_iterations}
    \end{subcaptionblock}\\
    \begin{subcaptionblock}{\textwidth}
        \centering
        \includegraphics[width=\scalingFactor\textwidth]{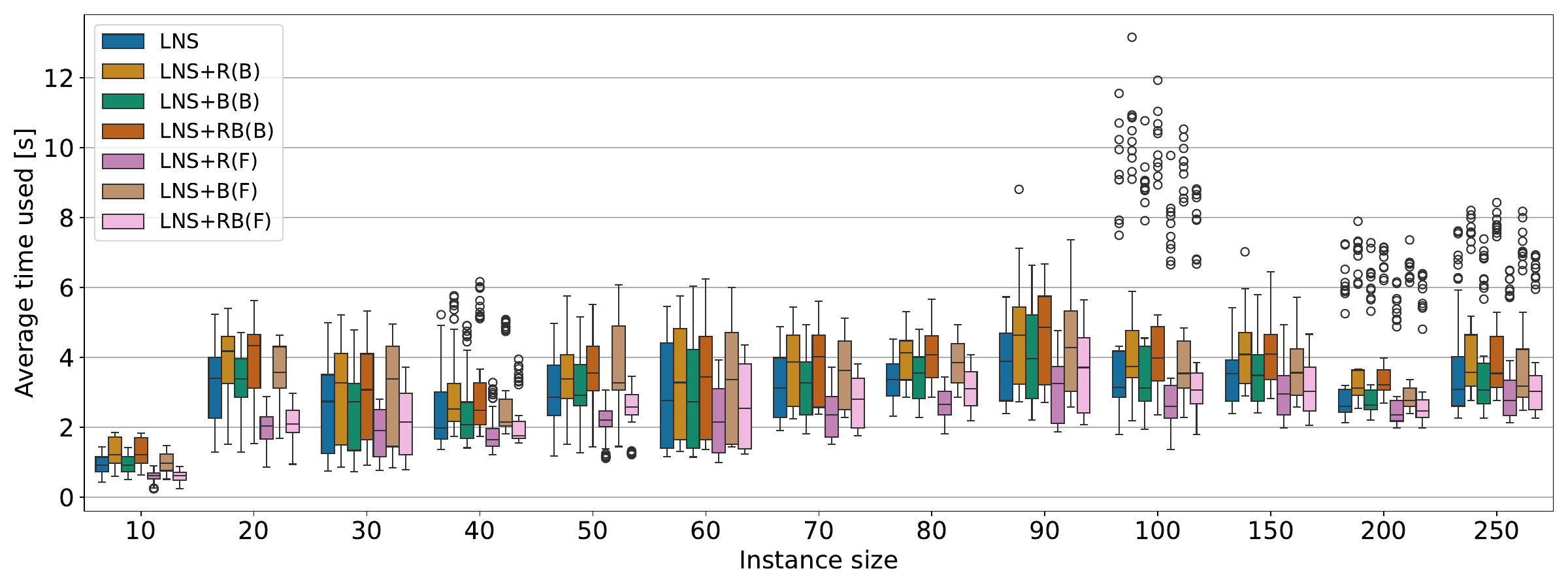}
        \caption{Average repairing time}\label{fig:boxplot_avgtime}
    \end{subcaptionblock}\\
    \begin{subcaptionblock}{\textwidth}
        \centering
        \includegraphics[width=\scalingFactor\textwidth]{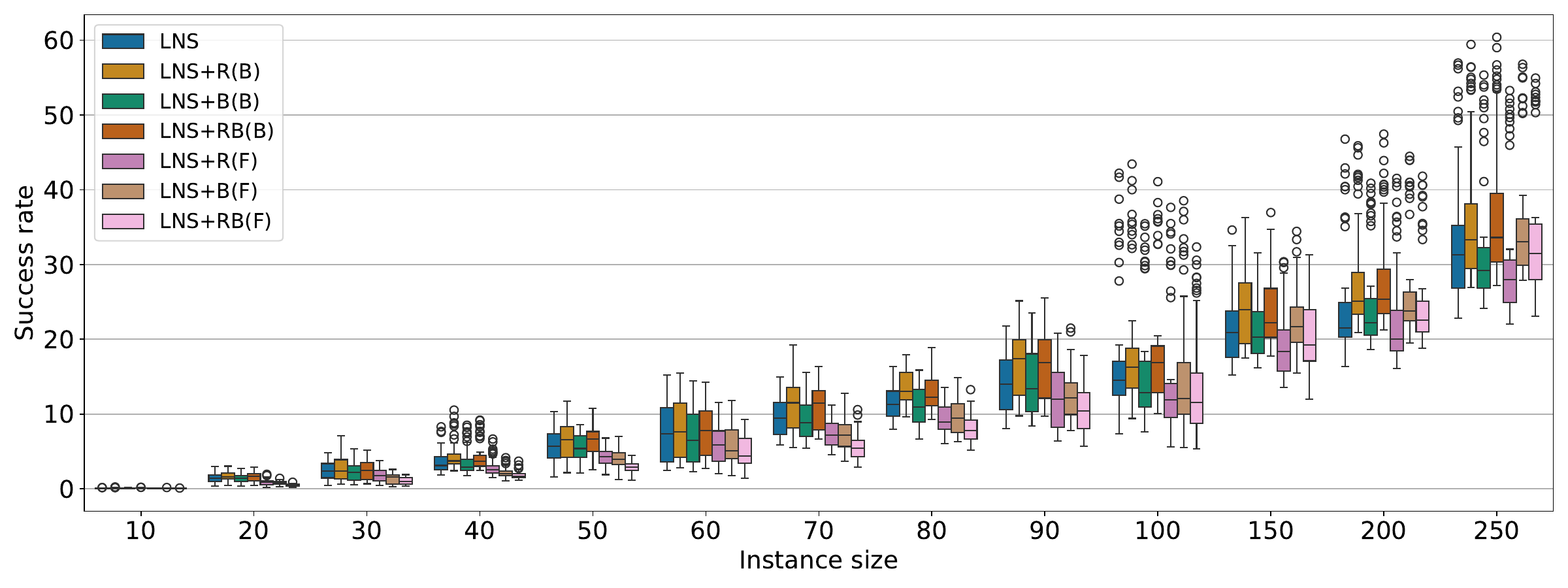}
            \caption{Success rate}\label{fig:boxplot_success_rate}
    \end{subcaptionblock}%
    \caption{Metrics of different LNS integrations}
    \label{fig:factors_size}
\end{figure}

Figure~\ref{fig:factors_size} shows the number of iterations, average repairing time, and success rate for each of the methods, grouped by size. The general trend based on the size is clear. Smaller instances allow more iterations, since they are faster. For smaller instances, the success rate is low, indicating fast convergence, while for larger instances success rate is high, indicating less time is spent in local optima.

Between different variants of \gls{lns}, differences are small, but with several notable patterns. First, \lns and both \textsc{+b} variants show very similar results in all three metrics, which makes sense since the work in the background thread should not have significant impact on the main thread.

Next, both \lnsr and \lnsrb show fewer and slower iterations, while both \lnsrf and \lnsrbf show more and faster iterations. This is interesting since column reuse (\textsc{+r}) is supposed to speed up solving the sub-problems. This only seems to work when adding the full set of columns \textsc{(f)}, while only adding the best columns \textsc{(b)} actually seems to generate overhead instead of speeding up the search.

\subsubsection{Performance}

\begin{figure}[!tph]
    \centering
    \includegraphics[width=\scalingFactor\textwidth]{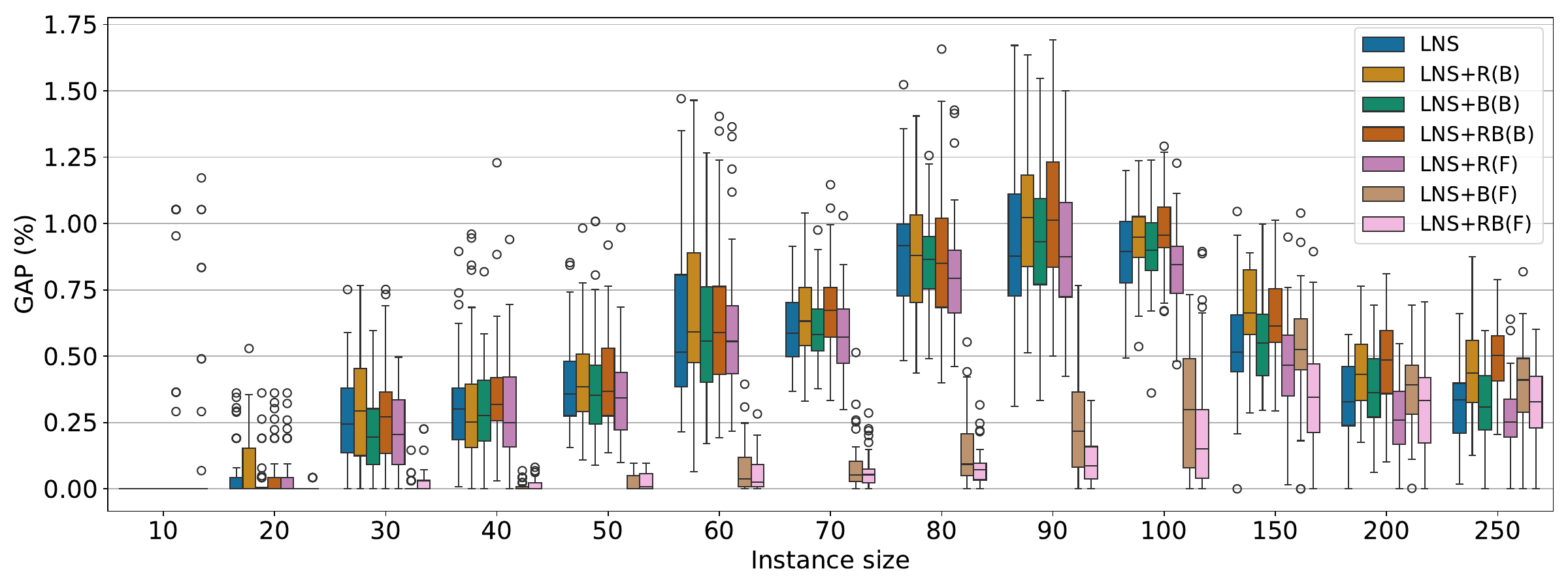}
            \caption{GAP for different LNS integrations}\label{fig:boxplot_gap}
\end{figure}

Figure~\ref{fig:boxplot_gap} shows the GAP to the best known solution for the different variants. While the distinction regarding the metrics was small, the distinction regarding the GAP is very clear. \lnsbf and \lnsrbf significantly outperform all other methods, which show similar performance among each other. This indicates that adding the background thread (\textsc{+b}) with the full set of columns \textsc{(f)} is the key combination to improve performance.

The shape of the graph shows that this improvement is most beneficial for medium to large, but not very large instances. For small instances, all methods perform very well, with more notable distinctions starting at size 30. Separation grows larger up to around size 90, while for larger instances the gap between methods shrinks, and methods perform very similar for sizes 200 and 250.

Of note is that for up to size 90, \lnsbf and \lnsrbf also show very low standard deviations, making them more stable than the other methods. \lnsbf starts to degrade around size 80, while \lnsrbf shows further benefits regarding deviations and also slightly better results than \lnsbf for sizes 80 to 150.

\begin{figure}[!tph]
    \centering
    \includegraphics[width=\scalingFactor\textwidth]{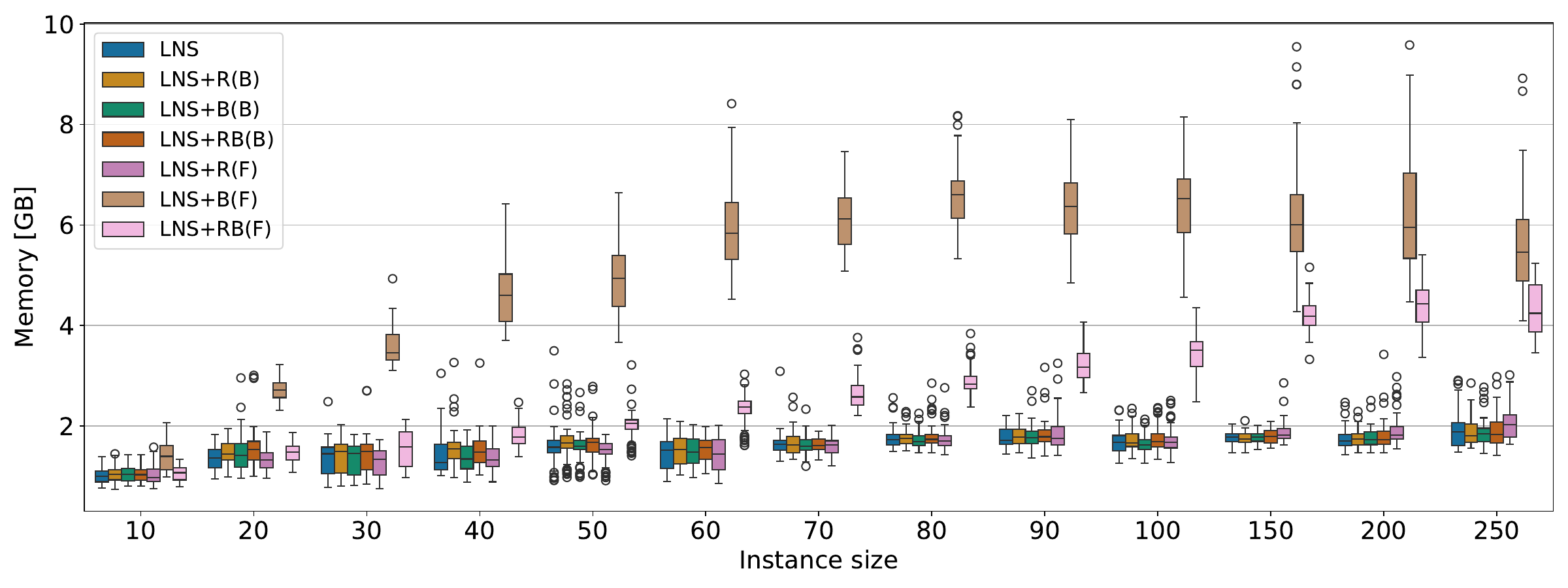}
            \caption{Memory usage of different LNS integrations}\label{fig:boxplot_ram_memory}
\end{figure}

Figure~\ref{fig:boxplot_ram_memory} shows the memory use of the different \gls{lns} versions. The first conclusion is that all versions that do not combine the background thread (\textsc{+b}) with full storage \textsc{(f)} show very similar memory use. The pure storage, even of the full set of columns, is not significant, since the storage using bit sets in the implementation is very light-weight. However, there is significantly larger memory usage for \lnsbf and \lnsrbf, especially for larger instances, showing that using the background thread with a larger number of columns is what needs extra memory.

While the quality of the results showed the large impact of the background thread and only minor improvements for column reuse, in comparison \lnsrbf uses significantly less memory than \lnsbf. Upon closer investigation, the number of columns in the background thread grows much slower in \lnsrbf than in \lnsbf. For example, in the instance realistic\_100\_49, the average number of columns differs by a factor of more than 3, from $\num{2.0d6}$ in \lnsbf to \num{6.7d5} in \lnsrbf. The suspected reason is that reusing the columns prevents the sub-problems from generating a large number of suboptimal columns which would clutter the background thread. This is also consistent with the slight performance benefit of \lnsbf regarding the GAP for sizes 80 to 150. While \lnsbf already has the background thread cluttered with too many sub-optimal columns, \lnsrbf avoids this bottleneck for longer.

\subsubsection{Convergence Plots}

\begin{figure}[!tph]
    \centering
        \begin{subcaptionblock}{.45\textwidth}
        \centering
        \includegraphics[width=\textwidth]{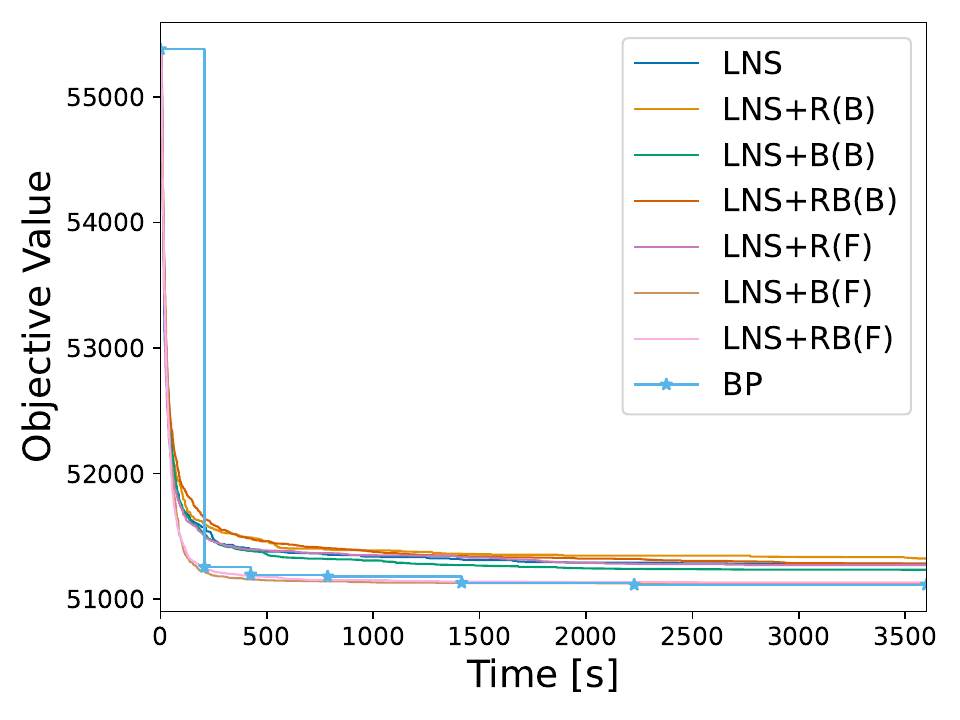}
            \caption{Realistic\_30\_12}\label{fig:convergence_30_12}
    \end{subcaptionblock}
    \begin{subcaptionblock}{.45\textwidth}
        \centering
        \includegraphics[width=\textwidth]{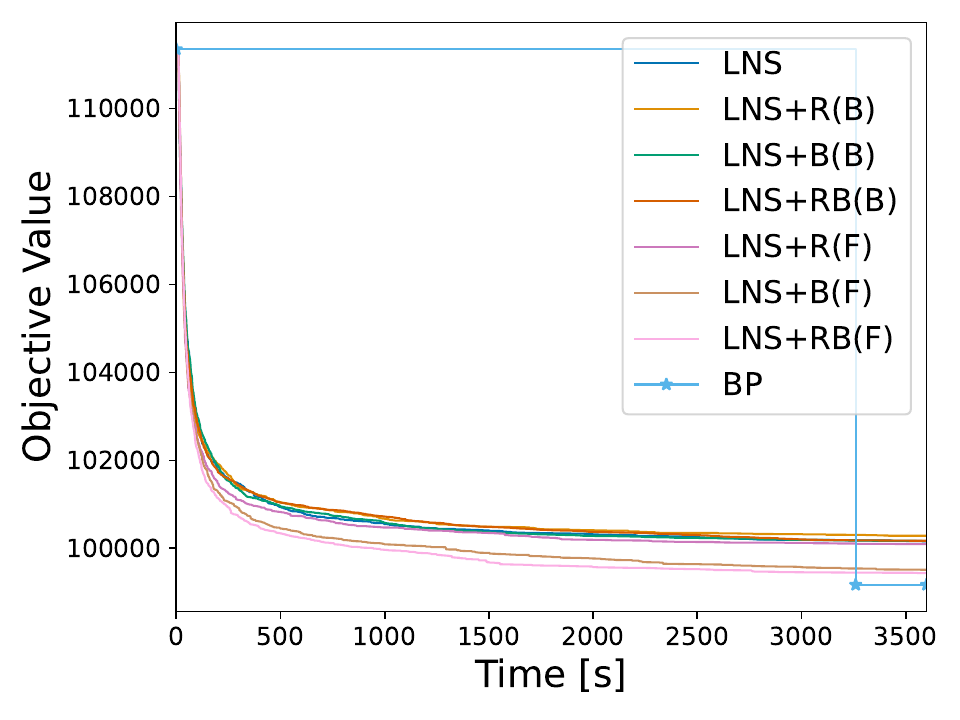}    \caption{Realistic\_60\_28}\label{fig:convergence_60_28}
    \end{subcaptionblock}\\
        \begin{subcaptionblock}{.45\textwidth}
        \centering
        \includegraphics[width=\textwidth]{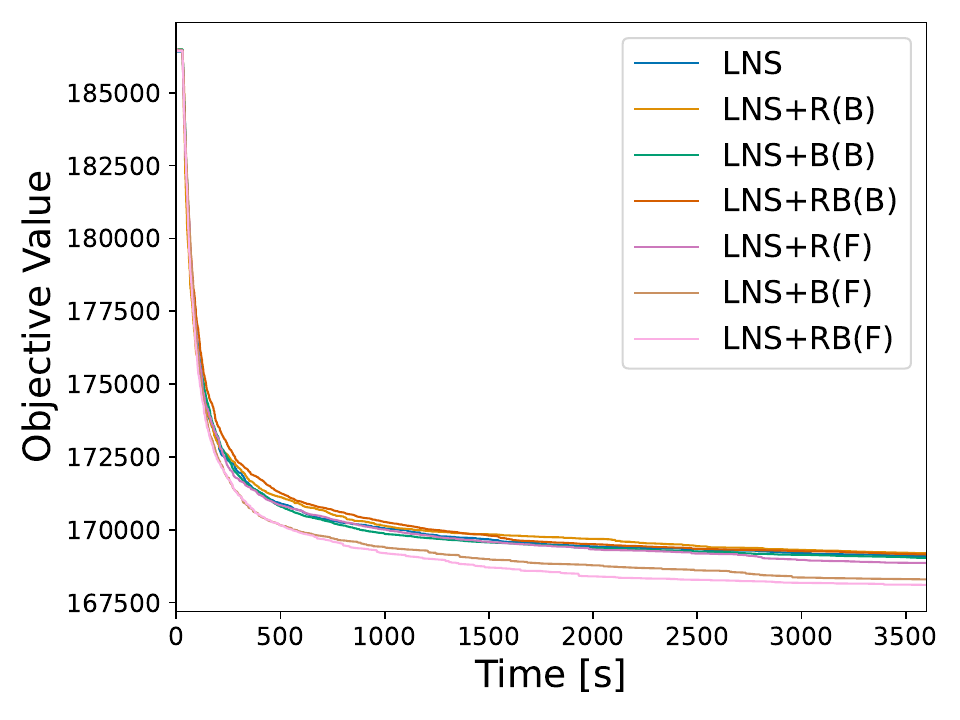}
            \caption{Realistic\_100\_49}\label{fig:convergence_100_49}
    \end{subcaptionblock}
    \begin{subcaptionblock}{.45\textwidth}
        \centering
        \includegraphics[width=\textwidth]{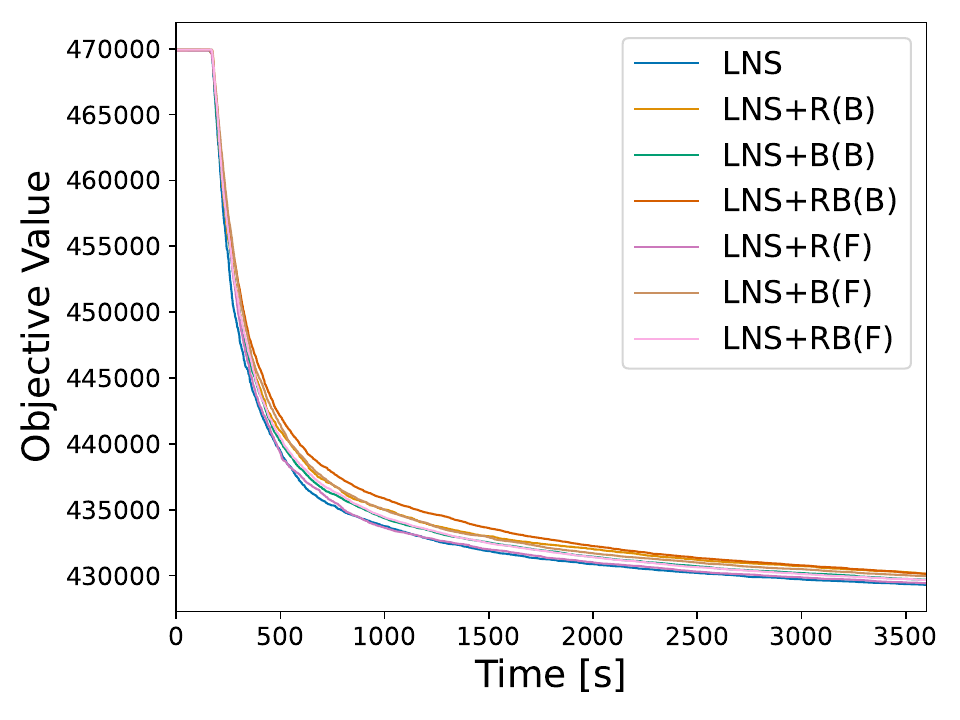}    \caption{Realistic\_250\_64}\label{fig:convergence_250_64}
    \end{subcaptionblock}\\
    \caption{Convergence plots}
    \label{fig:convergence plots}
\end{figure}

In order to analyze the behavior of the different methods over their runtime, Figure~\ref{fig:convergence plots} shows the convergence plots for some instances of different sizes to highlight different behaviors. The trajectories of the \gls{lns} versions are the average trajectories over 10 runs to avoid showing outlying behavior. 

The first three instances all show the dominance of \lnsbf and \lnsrbf over the other \gls{lns} methods. The trajectories of these methods separate from the others very early in the search, and consistently stay ahead until the end.

Instance 30\_12 shows the behavior of \bpplain for smaller instances. While the results increase in steps (at the end of some nodes in the branching tree), the overall result and trajectory are comparable to the best \gls{lns} versions. Instance 60\_28 shows the behavior of \bpplain for mid-sized instances. While it can outperform the \gls{lns} versions in the end, it does not provide a solution for a very long time, leading to a much better any-time behavior of \gls{lns}. For the larger instances, \bpplain only provides a solution with extra runtime, or using the background thread (\bpbg).

Instance 250\_64 shows the behavior of \gls{lns} for a very large instance. Here, the versions perform very similar, and the trajectories flatten out less, indicating that \gls{lns} is still improving regularly without hitting too many local optima, while the problem solved in the background thread gets larger and therefore harder to solve.

\subsection{Comparison of Different Methods}

\input{{result_final_size.tex}}

\begin{figure}[!tph]
    \centering
    \includegraphics[width=\scalingFactor\textwidth]{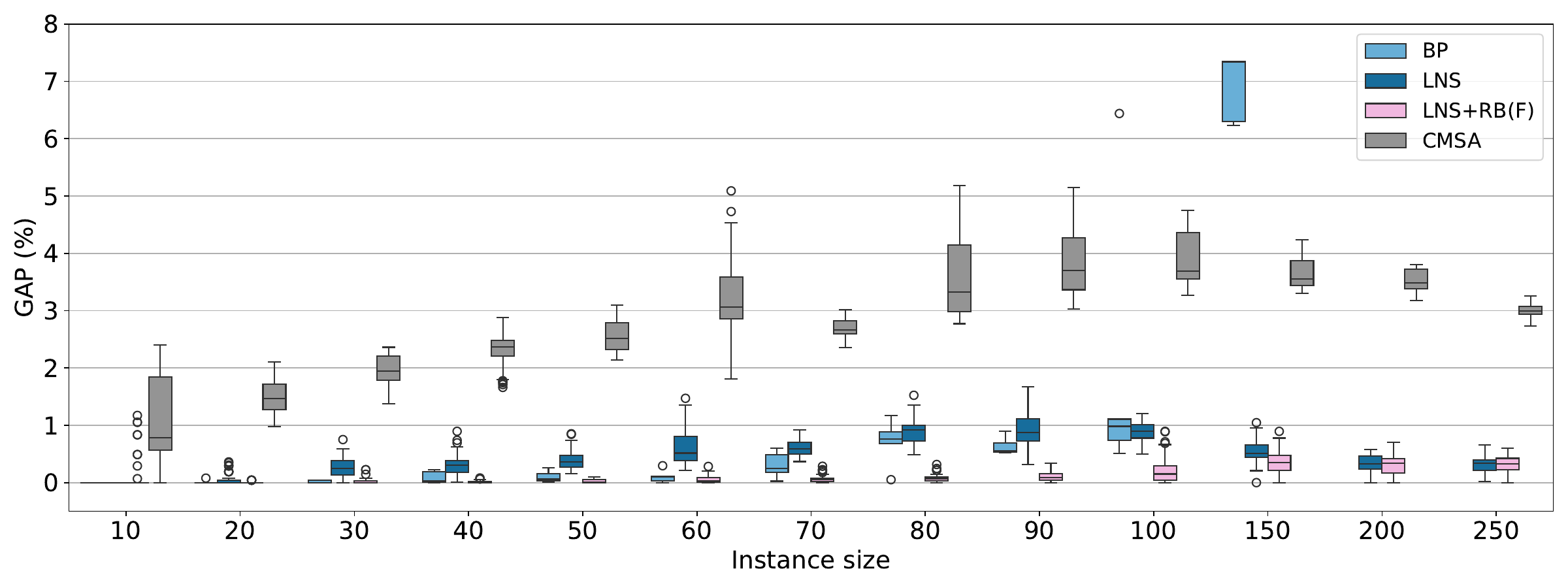}
    \caption{Grouped results }
    \label{fig:final_results_gap_with_bp_cmsa}
\end{figure}

Table~\ref{tab:results_size} shows the comparison of the final versions of \bpplain and \lns, and the best performing integration \lnsrbf, as well as the previous state of the art \textsc{cmsa} \cite{Rosati2023}. Note that \textsc{cmsa} outperforms all earlier meta-heuristics on this problem, and was executed on a more powerful machine than ours (AMD Ryzen Threadripper PRO 3975WX processor with 32 cores, with
a base clock frequency of 3.5 GHz, \qty{64}{\giga\byte} of RAM). For the stochastic methods, the minimum and average across 10 runs are presented, for \bpplain the result of one run. The table shows minimum and average per size, with each row averaging the 5 instances of this particular size. Detailed results for these and other tested methods per instance are available in the appendix. The best performing method is highlighted in bold. Figure~\ref{fig:final_results_gap_with_bp_cmsa} shows the gaps for these methods (average is used for stochastic methods).

The results indicate that \bpplain is the best method to use for small instances, as it can solve very small instances to optimality in a few seconds, and provides high-quality solutions with known low gaps to the optimum for up to size 30. \bpplain considerably starts to struggle for large instances.

While results of \bpplain still outperforms \textsc{cmsa} for instances of size up to 90, starting with size 40, the integrated method \lnsrbf already outperforms all other methods, and consistently provides the best results across all further sizes, making it the new state of the art for this problem.

Since \lnsrbf uses the background thread, and therefore more resources both regarding computation and memory, the best method regarding efficiency is \lns, which only needs one thread, and uses very moderate memory consumption. Still, in a very competitive environment, small improvements like those gained by \lnsrbf translate to large savings over time, and the moderate additional resource consumption is worth the additional benefit in practice.

\subsubsection{Statistical Significance.}

To add a well-founded view of the results, we perform a statistical analysis. For this purpose, we use the software the tool \textsc{scmamp}~\cite{scmamp}\footnote{\url{https://github.com/b0rxa/scmamp/tree/3cf4d8b9759769cdf20771afa0efc33a5265c7f9}} ran with \texttt{R} (version 4.2.2). For our comparison, we consider \bpplain, all our variants of LNS, as well as other results from the literature: \textsc{cmsa}~\cite{Rosati2023}, Simulated Annealing,  Hill Climbing~\cite{kletzander2020solving}, and Tabu Search~\cite{Kletzander2022}.

The analysis is done in two steps: the omnibus test, and post-hoc test. In the omnibus test, we check whether at least one of the algorithms performs differently than the others. To do that, following the guidelines of Calvo \cite{Calvo}, we use the Friedman test with Iman and Davemport extension.  We formulate a Hypothesis $H_0$: \textit{for every instance, the average objective function values are identical on all the algorithms.}  With a significance level of $\alpha=0.05$, the test rejects the null hypothesis with a $p$-value smaller than $\num{2.2d-16}$.
Therefore, we can conclude that that there is strong statistical evidence that at least one algorithm performs differently than the rest. 
Thus, we conduct a post-hoc test to detect differences by pairs.

We conduct an all pair-wise comparison with the Shaffer's static method. The results on all instances are graphically shown with a \textit{Critical Difference} (CD) plot in Figure~\ref{fig:CDplots}. Each considered algorithm is placed on the horizontal axis according to its average ranking for the instances (lower is better). The performances of those algorithm variants below the critical difference threshold ($0.94$) 
are considered statistically equivalent. In the CD plot, this is remarked by a horizontal bold bar that joins different algorithms.

\begin{figure}[!tph]
    \centering
\includegraphics[width=\textwidth]{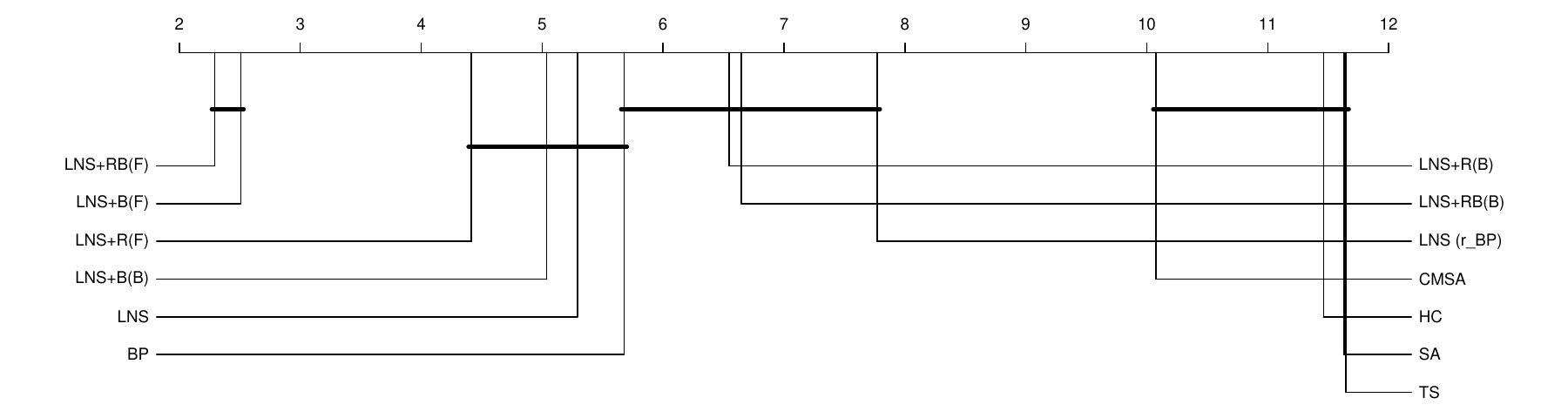}
    \caption{Critical Difference plot}
    \label{fig:CDplots}
\end{figure}

According to Figure~\ref{fig:CDplots} there is no significant difference between \lnsrbf and \lnsbf. They both outperform all other algorithms and LNS variants. All variants of \gls{lns} outperform the previous meta-heuristic methods with significant difference.

\section{Conclusions}\label{sec:conclusion}

In this article we provided a comprehensive study of \acrfull{bp} and \acrfull{lns} for the \acrfull{bdsp}. We defined a high-dimensional \acrfull{rcspp} as the sub-problem for \gls{bp} that is able to represent the complex break constraints in our model, and introduced several novel adaptations to solve this sub-problem efficiently, including splitting the sub-problem, exponential arc throttling, and the usage of k-d trees. With this method we are able to solve small problems to optimality in seconds, and provide results with known optimality gaps of less than one percent for mid-sized instances, constituting the state of the art for these instances.

For \gls{lns}, we introduced the novel destroy operator \tr that exploits the structure of the problem, and thoroughly evaluated several different design choices for both the destroy and repair operators, as well as the use of adaptivity. The results scale very well even to very large instances, and can outperform previous meta-heuristic solution methods.

Finally, we proposed a novel tight integration between \gls{lns} and \acrfull{cg}, where the best version of the integration \lnsrbf stores all columns from each sub-problem in \gls{lns}, reusing them for future sub-problems, and solving the integer master problem with all columns in a background thread. The evaluation showed that the background thread helps to significantly improve the results of \gls{lns}, especially for mid- to large-sized instances, while the column reuse is very beneficial to reduce the memory usage of the background thread. Overall, \lnsrbf is the new state of the art for instances of medium to large size.

While this article evaluates the methods on this complex version of \gls{bdsp}, both the concepts introduced to solve high-dimensional \glspl{rcspp}, and the integration of \gls{lns} and \gls{cg} are general and can be applied to other scenarios, or to other optimization problems.

As future work, we would like to apply the new concepts to different optimization problems, both Bus Driver Scheduling with different rule sets, as well as entirely different problems. For the \gls{bdsp}, future work could focus especially of the very large instances, where the benefits of \lnsrbf seem to degrade, and investigate further improvements of the integration like eliminating sub-optimal columns from the background thread, or more choices between accepting the best or full set of columns.


{\small
\bibliographystyle{alphaurl}
\bibliography{bibliography}

\newcommand{\etalchar}[1]{$^{#1}$}
\begin{thebibliography}{VdBBDB{\etalchar{+}}13}

\bibitem[Ben80]{bentley1980multidimensional}
Jon~Louis Bentley.
\newblock Multidimensional divide-and-conquer.
\newblock {\em Communications of the ACM}, 23(4):214--229, 1980.

\bibitem[BGM{\etalchar{+}}08]{beerscheduling2008}
Andreas Beer, Johannes Gaertner, Nysret Musliu, Werner Schafhauser, and Wolfgang Slany.
\newblock Scheduling {Breaks} in {Shift} {Plans} for {Call} {Centers}.
\newblock In {\em Proceedings of the 7th {International} {Conference} on the {Practice} and {Theory} of {Automated} {Timetabling}}, pages 1--17, 2008.

\bibitem[BGM{\etalchar{+}}10]{beerai-based2010}
Andreas Beer, Johannes G\"artner, Nysret Musliu, Werner Schafhauser, and Wolfgang Slany.
\newblock An {AI}-{Based} {Break}-{Scheduling} {System} for {Supervisory} {Personnel}.
\newblock {\em IEEE Intelligent Systems}, 25(2):60--73, March 2010.

\bibitem[BJN{\etalchar{+}}98]{barnhart1998branch}
Cynthia Barnhart, Ellis~L Johnson, George~L Nemhauser, Martin~WP Savelsbergh, and Pamela~H Vance.
\newblock Branch-and-price: Column generation for solving huge integer programs.
\newblock {\em Operations research}, 46(3):316--329, 1998.

\bibitem[BP76]{balas1976set}
Egon Balas and Manfred~W Padberg.
\newblock Set partitioning: A survey.
\newblock {\em SIAM review}, 18(4):710--760, 1976.

\bibitem[Cal25]{scmamp}
Borja Calvo.
\newblock scmamp: Statistical comparison of multiple algorithms in multiple problems.
\newblock \url{https://github.com/b0rxa/scmamp/tree/3cf4d8b9759769cdf20771afa0efc33a5265c7f9}, 2025.
\newblock Accessed: 2025-01-30.

\bibitem[CdMdA{\etalchar{+}}17]{constantino2017solving}
Ademir~Aparecido Constantino, Candido~FX de~Mendonca, Silvio~Alexandre de~Araujo, Dario Landa-Silva, Rog{\'e}rio Calvi, and Allainclair~Flausino dos Santos.
\newblock Solving a large real-world bus driver scheduling problem with a multi-assignment based heuristic algorithm.
\newblock {\em Journal of Universal Computer Science}, 23(5), 2017.
\newblock URL: \url{https://nottingham-repository.worktribe.com/output/862152}.

\bibitem[CHT12]{chen2012maxima}
Wei-Mei Chen, Hsien-Kuei Hwang, and Tsung-Hsi Tsai.
\newblock Maxima-finding algorithms for multidimensional samples: A two-phase approach.
\newblock {\em Computational Geometry}, 45(1-2):33--53, 2012.

\bibitem[CMS19]{carmo2019adaptive}
Leandro~do Carmo~Martins and Gustavo~Peixoto Silva.
\newblock An adaptive large neighborhood search heuristic to solve the crew scheduling problem.
\newblock In {\em Smart and Digital Cities}, pages 45--64. Springer, 2019.
\newblock \href {https://doi.org/10.1007/978-3-030-12255-34} {\path{doi:10.1007/978-3-030-12255-34}}.

\bibitem[CS16]{Calvo}
Borja Calvo and Guzmán Santafé.
\newblock {scmamp: Statistical Comparison of Multiple Algorithms in Multiple Problems}.
\newblock {\em {The R Journal}}, 8(1):248--256, 2016.
\newblock \href {https://doi.org/10.32614/RJ-2016-017} {\path{doi:10.32614/RJ-2016-017}}.

\bibitem[CSSC13]{chencrew2013}
Shijun Chen, Yindong Shen, Xuan Su, and Heming Chen.
\newblock A {Crew} {Scheduling} with {Chinese} {Meal} {Break} {Rules}.
\newblock {\em Journal of Transportation Systems Engineering and Information Technology}, 13(2):90--95, April 2013.

\bibitem[DLFM11]{deleonebus2011}
Renato De~Leone, Paola Festa, and Emilia Marchitto.
\newblock A {Bus} {Driver} {Scheduling} {Problem}: a new mathematical model and a {GRASP} approximate solution.
\newblock {\em Journal of Heuristics}, 17(4):441--466, August 2011.

\bibitem[DS89]{desrocherscolumn1989}
Martin Desrochers and François Soumis.
\newblock A {Column} {Generation} {Approach} to the {Urban} {Transit} {Crew} {Scheduling} {Problem}.
\newblock {\em Transportation Science}, 23(1):1--13, February 1989.

\bibitem[EJKS04]{ernststaff2004}
A.T Ernst, H~Jiang, M~Krishnamoorthy, and D~Sier.
\newblock Staff scheduling and rostering: {A} review of applications, methods and models.
\newblock {\em European Journal of Operational Research}, 153(1):3--27, February 2004.

\bibitem[FGHS24]{fichte2024parallel}
Johannes~K Fichte, Tobias Geibinger, Markus Hecher, and Matthias Schl{\"o}gel.
\newblock Parallel empirical evaluations: Resilience despite concurrency.
\newblock In {\em Proceedings of the AAAI Conference on Artificial Intelligence}, volume~38, pages 8004--8012, 2024.

\bibitem[FMKM24]{frohner2024decision}
Nikolaus Frohner, Esther Mugdan, Lucas Kletzander, and Nysret Musliu.
\newblock A decision support system prototype for automated bus driver scheduling.
\newblock In {\em Proceedings of the 14th International Conference on the Practice and Theory of Automated Timetabling, PATAT 2024}, pages 259--262, 2024.

\bibitem[Hun07]{Hunter:2007}
J.~D. Hunter.
\newblock Matplotlib: A 2d graphics environment.
\newblock {\em Computing in Science \& Engineering}, 9(3):90--95, 2007.
\newblock \href {https://doi.org/10.1109/MCSE.2007.55} {\path{doi:10.1109/MCSE.2007.55}}.

\bibitem[ID05]{irnich2005shortest}
Stefan Irnich and Guy Desaulniers.
\newblock Shortest path problems with resource constraints.
\newblock In {\em Column generation}, pages 33--65. Springer, 2005.

\bibitem[IRDGMn15]{ibarra-rojasplanning2015}
O.J. Ibarra-Rojas, F.~Delgado, R.~Giesen, and J.C. Mu\~noz.
\newblock Planning, operation, and control of bus transport systems: {A} literature review.
\newblock {\em Transportation Research Part B: Methodological}, 77:38--75, July 2015.

\bibitem[Irn08]{irnich2008resource}
Stefan Irnich.
\newblock Resource extension functions: Properties, inversion, and generalization to segments.
\newblock {\em OR Spectrum}, 30(1):113--148, 2008.

\bibitem[KM20]{kletzander2020solving}
Lucas Kletzander and Nysret Musliu.
\newblock Solving large real-life bus driver scheduling problems with complex break constraints.
\newblock {\em Proceedings of the International Conference on Automated Planning and Scheduling}, 30(1):421--429, June 2020.
\newblock URL: \url{https://ojs.aaai.org/index.php/ICAPS/article/view/6688}.

\bibitem[KM23a]{kletzander2023dynamic}
Lucas Kletzander and Nysret Musliu.
\newblock Dynamic weight setting for personnel scheduling with many objectives.
\newblock In {\em Proceedings of the International Conference on Automated Planning and Scheduling}, volume~33, pages 509--517, 2023.

\bibitem[KM23b]{Kletzander2023last}
Lucas Kletzander and Nysret Musliu.
\newblock Large-state reinforcement learning for hyper-heuristics.
\newblock {\em Proceedings of the {AAAI} Conference on Artificial Intelligence}, 37(10):12444--12452, jun 2023.
\newblock \href {https://doi.org/10.1609/aaai.v37i10.26466} {\path{doi:10.1609/aaai.v37i10.26466}}.

\bibitem[KMM22]{Kletzander2022}
Lucas Kletzander, Tommaso~Mannelli Mazzoli, and Nysret Musliu.
\newblock Metaheuristic algorithms for the bus driver scheduling problem with complex break constraints.
\newblock In {\em Proceedings of the Genetic and Evolutionary Computation Conference}. {ACM}, July 2022.
\newblock \href {https://doi.org/https://dl.acm.org/doi/10.1145/3512290.3528876} {\path{doi:https://dl.acm.org/doi/10.1145/3512290.3528876}}.

\bibitem[KMVH21]{kletzander2021branch}
Lucas Kletzander, Nysret Musliu, and Pascal Van~Hentenryck.
\newblock Branch and price for bus driver scheduling with complex break constraints.
\newblock {\em Proceedings of the AAAI Conference on Artificial Intelligence}, 35(13):11853--11861, May 2021.
\newblock URL: \url{https://ojs.aaai.org/index.php/AAAI/article/view/17408}.

\bibitem[LH16]{lincolumn2016}
Dung-Ying Lin and Ching-Lan Hsu.
\newblock A column generation algorithm for the bus driver scheduling problem: {Bus} {Driver} {Scheduling} {Problem}.
\newblock {\em Journal of Advanced Transportation}, 50(8):1598--1615, December 2016.

\bibitem[LK03]{lifuzzy2003}
Jingpeng Li and Raymond~S.K. Kwan.
\newblock A fuzzy genetic algorithm for driver scheduling.
\newblock {\em European Journal of Operational Research}, 147(2):334--344, June 2003.

\bibitem[LPaP01]{lourencomultiobjective2001}
Helena~R. Louren\c{c}o, José~P. Paix\~{a}o, and Rita Portugal.
\newblock Multiobjective {Metaheuristics} for the {Bus} {Driver} {Scheduling} {Problem}.
\newblock {\em Transportation Science}, 35(3):331--343, August 2001.

\bibitem[MKVHM24]{mazzoli2024investigating}
Tommaso~Mannelli Mazzoli, Lucas Kletzander, Pascal Van~Hentenryck, and Nysret Musliu.
\newblock Investigating large neighbourhood search for bus driver scheduling.
\newblock In {\em Proceedings of the International Conference on Automated Planning and Scheduling}, volume~34, pages 360--368, 2024.

\bibitem[MT86]{martelloheuristic1986}
Silvano Martello and Paolo Toth.
\newblock A heuristic approach to the bus driver scheduling problem.
\newblock {\em European Journal of Operational Research}, 24(1):106--117, January 1986.

\bibitem[PLPa09]{portugaldriver2009}
Rita Portugal, Helena~R. Lourenço, and José~P. Paix\~{a}o.
\newblock Driver scheduling problem modelling.
\newblock {\em Public Transport}, 1(2):103--120, June 2009.

\bibitem[RKB{\etalchar{+}}23]{Rosati2023}
Roberto~Maria Rosati, Lucas Kletzander, Christian Blum, Nysret Musliu, and Andrea Schaerf.
\newblock Construct, merge, solve and~adapt applied to~a~bus driver scheduling problem with~complex break constraints.
\newblock In {\em {AIxIA} 2022 {\textendash} Advances in Artificial Intelligence}, pages 254--267. Springer International Publishing, 2023.
\newblock \href {https://doi.org/10.1007/978-3-031-27181-618} {\path{doi:10.1007/978-3-031-27181-618}}.

\bibitem[RMVP13]{respicio2013enhanced}
Ana Resp{\'\i}cio, Margarida Moz, and Margarida Vaz~Pato.
\newblock Enhanced genetic algorithms for a bi-objective bus driver rostering problem: a computational study.
\newblock {\em International Transactions in Operational Research}, 20(4):443--470, 2013.

\bibitem[RP06]{Ropke2006}
Stefan Ropke and David Pisinger.
\newblock An adaptive large neighborhood search heuristic for the pickup and delivery problem with time windows.
\newblock {\em Transportation Science}, 40(4):455--472, nov 2006.
\newblock \href {https://doi.org/10.1287/trsc.1050.0135} {\path{doi:10.1287/trsc.1050.0135}}.

\bibitem[Sha98]{Shaw1998}
Paul Shaw.
\newblock Using constraint programming and local search methods to solve vehicle routing problems.
\newblock In {\em Principles and Practice of Constraint Programming {\textemdash} {CP}98}, pages 417--431. Springer Berlin Heidelberg, 1998.
\newblock \href {https://doi.org/10.1007/3-540-49481-230} {\path{doi:10.1007/3-540-49481-230}}.

\bibitem[SK01]{fandeltabu2001}
Yindong Shen and Raymond S.~K. Kwan.
\newblock Tabu {Search} for {Driver} {Scheduling}.
\newblock In G.~Fandel, W.~Trockel, C.~D. Aliprantis, Dan Kovenock, Stefan Voß, and Joachim~R. Daduna, editors, {\em Computer-{Aided} {Scheduling} of {Public} {Transport}}, volume 505, pages 121--135. Springer Berlin Heidelberg, Berlin, Heidelberg, 2001.

\bibitem[SW88]{smith1988bus}
Barbara~M. Smith and Anthony Wren.
\newblock A bus crew scheduling system using a set covering formulation.
\newblock {\em Transportation Research Part A: General}, 22(2):97--108, 1988.
\newblock URL: \url{https://www.sciencedirect.com/science/article/pii/0191260788900222}, \href {https://doi.org/10.1016/0191-2607(88)90022-2} {\path{doi:10.1016/0191-2607(88)90022-2}}.

\bibitem[TK13]{tothefficient2013}
Attila T\'oth and Mikl\'os Kr\'esz.
\newblock An efficient solution approach for real-world driver scheduling problems in urban bus transportation.
\newblock {\em Central European Journal of Operations Research}, 21(S1):75--94, June 2013.

\bibitem[VdBBDB{\etalchar{+}}13]{vandenberghpersonnel2013}
Jorne Van~den Bergh, Jeroen Beliën, Philippe De~Bruecker, Erik Demeulemeester, and Liesje De~Boeck.
\newblock Personnel scheduling: {A} literature review.
\newblock {\em European Journal of Operational Research}, 226(3):367--385, May 2013.

\bibitem[Was21]{Waskom2021}
Michael~L. Waskom.
\newblock seaborn: statistical data visualization.
\newblock {\em Journal of Open Source Software}, 6(60):3021, 2021.
\newblock \href {https://doi.org/10.21105/joss.03021} {\path{doi:10.21105/joss.03021}}.

\bibitem[WKO19]{WKO}
WKO.
\newblock Kollektivvertrag autobusbetriebe, arbeiter/innen / angestellte.
\newblock \url{https://www.wko.at/service/kollektivvertrag/kv-private-autobusbetriebe-2019.html}, 2019.
\newblock Accessed: 2024-04-10.

\bibitem[WM14]{widlbreak2014}
Magdalena Widl and Nysret Musliu.
\newblock The break scheduling problem: complexity results and practical algorithms.
\newblock {\em Memetic Computing}, 6(2):97--112, June 2014.

\bibitem[WR95]{fandelbus1995}
Anthony Wren and Jean-Marc Rousseau.
\newblock Bus {Driver} {Scheduling} — {An} {Overview}.
\newblock In G.~Fandel, W.~Trockel, Joachim~R. Daduna, Isabel Branco, and José M.~Pinto Paixão, editors, {\em Computer-{Aided} {Transit} {Scheduling}}, volume 430, pages 173--187. Springer Berlin Heidelberg, Berlin, Heidelberg, 1995.

\bibitem[Wre04]{wren2004scheduling}
Anthony Wren.
\newblock Scheduling vehicles and their drivers-forty years’ experience.
\newblock Technical report, University of Leed, 2004.

\end{thebibliography}
}
\appendix

\section{Reproducibility Checklist for JAIR}

All data artifacts and the part of the code we can provide are available online\footnote{\url{https://doi.org/10.5281/zenodo.15276063}}.

\subsection*{All articles:}

\begin{enumerate}
    \item All claims investigated in this work are clearly stated. 
    [yes]
    \item Clear explanations are given how the work reported substantiates the claims. 
    [yes]
    \item Limitations or technical assumptions are stated clearly and explicitly. 
    [yes]
    \item Conceptual outlines and/or pseudo-code descriptions of the AI methods introduced in this work are provided, and important implementation details are discussed. 
    [yes]
    \item 
    Motivation is provided for all design choices, including algorithms, implementation choices, parameters, data sets and experimental protocols beyond metrics.
    [yes]
\end{enumerate}

\subsection*{Articles containing theoretical contributions:}
Does this paper make theoretical contributions? 
[no] 

\subsection*{Articles reporting on computational experiments:}
Does this paper include computational experiments? [yes]

If yes, please complete the list below.
\begin{enumerate}
    \item 
    All source code required for conducting experiments is included in an online appendix 
    or will be made publicly available upon publication of the paper.
    The online appendix follows best practices for source code readability and documentation as well as for long-term accessibility.
    [partially]
    \item The source code comes with a license that
    allows free usage for reproducibility purposes.
    [partially]
    \item The source code comes with a license that
    allows free usage for research purposes in general.
    [partially]
    \item 
    Raw, unaggregated data from all experiments is included in an online appendix 
    or will be made publicly available upon publication of the paper.
    The online appendix follows best practices for long-term accessibility.
    [yes]
    \item The unaggregated data comes with a license that
    allows free usage for reproducibility purposes.
    [yes]
    \item The unaggregated data comes with a license that
    allows free usage for research purposes in general.
    [yes]
    \item If an algorithm depends on randomness, then the method used for generating random numbers and for setting seeds is described in a way sufficient to allow replication of results. 
    [yes]
    \item The execution environment for experiments, the computing infrastructure (hardware and software) used for running them, is described, including GPU/CPU makes and models; amount of memory (cache and RAM); make and version of operating system; names and versions of relevant software libraries and frameworks. 
    [yes]
    \item 
    The evaluation metrics used in experiments are clearly explained and their choice is explicitly motivated. 
    [yes]
    \item 
    The number of algorithm runs used to compute each result is reported. 
    [yes]
    \item 
    Reported results have not been ``cherry-picked'' by silently ignoring unsuccessful or unsatisfactory experiments. 
    [yes]
    \item 
    Analysis of results goes beyond single-dimensional summaries of performance (e.g., average, median) to include measures of variation, confidence, or other distributional information. 
    [yes]
    \item 
    All (hyper-) parameter settings for 
    the algorithms/methods used in experiments have been reported, along with the rationale or method for determining them. 
    [yes]
    \item 
    The number and range of (hyper-) parameter settings explored prior to conducting final experiments have been indicated, along with the effort spent on (hyper-) parameter optimisation. 
    [yes]
    \item 
    Appropriately chosen statistical hypothesis tests are used to establish statistical significance
    in the presence of noise effects.
    [yes]
\end{enumerate}

\subsection*{Articles using data sets:}
Does this work rely on one or more data sets (possibly obtained from a benchmark generator or similar software artifact)? 
[yes]

If yes, please complete the list below.
\begin{enumerate}
    \item 
    All newly introduced data sets 
    are included in an online appendix 
    or will be made publicly available upon publication of the paper.
    The online appendix follows best practices for long-term accessibility with a license
    that allows free usage for research purposes.
    [NA]
    \item The newly introduced data set comes with a license that
    allows free usage for reproducibility purposes.
    [NA]
    \item The newly introduced data set comes with a license that
    allows free usage for research purposes in general.
    [NA]
    \item All data sets drawn from the literature or other public sources (potentially including authors' own previously published work) are accompanied by appropriate citations.
    [yes]
    \item All data sets drawn from the existing literature (potentially including authors’ own previously published work) are publicly available. [yes]
    \item All data sets that are not publicly available are described in detail.
    [NA]
    \item All new data sets and data sets that are not publicly available are described in detail, including relevant statistics, the data collection process and annotation process if relevant.
    [NA]
    \item 
    All methods used for preprocessing, augmenting, batching or splitting data sets (e.g., in the context of hold-out or cross-validation)
    are described in detail. [yes]
\end{enumerate}

\subsection*{Explanations on any of the answers above (optional):}

Code availability: The Branch\&Price source code was developed in collaboration with our company partner and cannot be shared.

\end{document}